\newtheorem{thm}{Theorem}[section]
\newtheorem{prop}[thm]{Proposition}
\newtheorem{claim}[thm]{Claim}
\newtheorem{cor}[thm]{Corollary}
\newtheorem*{CCW}{Carbery-Christ-Wright Uniform Sublevel Set Theorem \cite{carbery1999multidimensional}}
\newtheorem*{CCW2}{Proposition (Carbery-Christ-Wright) \cite{carbery1999multidimensional}}
\newtheorem*{stei}{Theorem (Steinerberger) \cite{steinerberger2019sublevel}}
\title{Lower bounds on $L^p$ quasi-norms and the Uniform Sublevel Set Problem}
\author{John Green}
\date{}
\begin{document}
\maketitle
\begin{abstract}
Recently, Steinerberger \cite{steinerberger2019sublevel} proved a uniform inequality for the Laplacian serving as a counterpoint to the standard uniform sublevel set inequality which is known to fail for the Laplacian. In this paper, we observe that many inequalities of this type follow from a uniform lower bound on the $L^1$ norm, and give an analogous result for any linear differential operator, which can fail for non-linear operators. We consider lower bounds on the $L^p$ quasi-norms for $p<1$ as a stronger property that remains weaker than a uniform sublevel set inequality and prove this for the Laplacian and heat operators. We conclude with some naturally arising questions.
\end{abstract}
\begin{footnotesize}
\textbf{Key words.} Oscillatory integrals, sublevel set estimates, uniform inequality, Laplacian, heat operator, $L^p$ bounds
\end{footnotesize}
\section{Introduction}
\subsection{Background}
A central problem throughout analysis is to understand how oscillatory integrals
$$I(\lambda)=\int e^{i\lambda u(x)}\,dx$$
decay for large values of a real ``frequency parameter" $\lambda$, where $u$ is a real-valued ``phase" function. In general, considerations such as the domain of integration or other functions multiplying the oscillatory factor inside the integral are important, but for the sake of this discussion we will not go into these specifics. Typically this decay will be expressed as
$$|I(\lambda)|\leq C\lambda^{-\delta}$$
for some $\delta>0$. Here we have in mind the idea that as $\lambda$ increases, the small differences in $u(x)$ from moving in $x$ become large differences in $\lambda u(x)$, which in turn corresponds to rapid oscillation in $e^{i\lambda u(x)}$. Thus in the integral, we expect $I(\lambda)$ to decay for large $\lambda$ provided $u$ does not stay near any particular value, and the more quickly $u$ ``moves around", the greater the cancellation we expect to occur, and hence the greater we can take $\delta$ to be. Thus one of the most natural conditions to impose is that $Du$ be bounded below by some positive constant, for some differential operator $D$.

Crucial to many applications and key to the discussion in this paper is the idea of uniformity of the constant $C$ within a large class of phases. A natural example appears when studying the Fourier transform of some density on a hypersurface $S$ in $\mathbb{R}^n$. After performing a change of variables, we will have integrals containing an oscillatory factor $e^{-i(x,\phi(x))\cdot\xi}$, where $\phi$ is a function on a piece of $\mathbb{R}^{n-1}$ parametrising a piece of $S$, and $\xi$ is the Fourier variable. Writing $\xi=|\xi|\omega$ for $\omega\in S^{n-1}$, we consider $|\xi|$ to be our frequency parameter and $-(x,\phi(x))\cdot\omega$ is a class of phase functions indexed by $\omega$. If we are to obtain estimates on the Fourier transform of the form $C|\xi|^{-\delta}$, we need to make sure the constant $C$ does not blow-up as we vary over $\omega$. In line with the intuition expressed above, the decay of this Fourier transform is well-known to relate to the curvature of the surface, see Stein \cite{stein1993harmonic} for a discussion of the fundamental results on oscillatory integrals and their relation to the Fourier transform of surface measures.

A related problem is the sublevel set problem: given a real-valued function $u$ and a constant $c$ what conditions should we impose so that estimates of the form $|\{x\in\Omega:|u(x)-c|\leq\varepsilon\}|\leq C\varepsilon^\delta$ hold for appropriate $\Omega$? It is typical to seek estimates independent of $c$ so that the problem is invariant under shifting $u$ by a constant, and we can assume without loss of generality that $c=0$.

That this should be related is apparent from the intuition expressed above, that oscillatory integrals should observe greater cancellation if $u$ does not spend too much time near a given value. And just as in the oscillatory integral case, we are often not only interested in the best possible $\delta$, but also in the uniformity of the constant $C$ in a class of functions. As mentioned above, typically the class of functions for which we seek uniform bounds is those having $Du$ bounded below by some positive constant, where $D$ is a differential operator. For differential operators where $u$ itself does not appear explicitly, in particular for linear differential operators, this condition is invariant under translation of $u$ by a constant, so uniform estimates are necessarily independent of $c$.

We now recall some discussion from the paper of Carbery-Christ-Wright \cite{carbery1999multidimensional}. Oscillatory integral estimates of the form above are known to imply the corresponsing sublevel set estimates. In the case of monomial derivatives, that is, the differential operators $D^\beta=\partial^{\beta_1}_{x_1}\dots\partial^{\beta_n}_{x_n}$ for $\beta=(\beta_1,\dots,\beta_n)\in\mathbb{N}_0^n$, it is known that we can take $\delta=\|\beta\|_1^{-1}=(|\beta_1|+\dots+|\beta_n|)^{-1}$ in the sublevel set problem when $D^\beta u$ is bounded below by a positive constant, and that this is the optimal $\delta$. The same $\delta$ works in the oscillatory integral problem, provided some slightly more restrictive conditions are also imposed, we shall not discuss these here, but refer to Stein \cite{stein1993harmonic}.

However, in all but dimension $1$ (the van der Corput lemma), this $\delta$ has not been shown to hold with a uniform constant without imposing additional assumptions. More precisely, we do not have sublevel set estimates of $C\varepsilon^\delta$ with the optimal $\delta=\|\beta\|_1^{-1}$ and a constant $C$ independent of  the function $u$ satisfying $D^\beta u\geq 1$. The main results of the Carbery-Christ-Wright paper are the following, and again there is an analogue for oscillatory integrals with the slightly more restrictive conditions imposed.

\vspace{0.3cm}

\begin{CCW}
Denote the unit cube in $\mathbb{R}^n$ by $Q_n=[0,1]^n$ and fix $\beta\in\mathbb{N}_0^n$. There exists $C, \delta>0$, depending on $\beta$, such that for any smooth $u$ in a neighbourhood of $Q_n$ having $D^\beta u\geq 1$ on $Q_n$, we have the sublevel set estimates
$$|\{x\in Q_n:|u(x)|\leq\varepsilon\}|\leq C\varepsilon^\delta.$$
Note that $C$ and $\delta$ do not depend on $u$.
\end{CCW}
They also observe that their arguments make sense when $Q_n$ is replaced with different convex sets. Note that this result says nothing about the optimality of $\delta$. It remains open in higher dimensions as to what is the best $\delta$ for which such bounds hold with a uniform constant.

It is worth noting the connections made between the structure of the sublevel sets with certain combinatorial problems, in this direction see also the papers of Katz \cite{katz1999self} and Katz-Krop-Maggioni \cite{katz2002remarks}.

In higher dimensions, we have access to many interesting differential operators, one natural example being the Laplacian. For the Laplacian, one can obtain estimates with $\delta=1/2$ and a non-uniform constant depending on the derivatives of the function up to third order. To see this, one notes that at each point some second order repeated monomial derivative is ``large", and then the mean value theorem can be used to divide the domain into a number of regions depending on the third order derivatives on each of which a repeated monomial derivative remains large, then simply apply the one-dimensional theory - the idea is similar to that used in the proof for monomial derivative estimates given in Stein \cite{stein1993harmonic}. However, in the paper of Carbery-Christ-Wright, it is shown that no uniform estimate can hold for any positive $\delta$.

\vspace{0.3cm}

\begin{CCW2}
For each $\varepsilon\in(0,1/2)$ there exists a smooth $u$ with $\Delta u \equiv 1$ on $[0,1]^2$ but also satisfying the estimate $|\{x\in[0,1]^2:|u(x)|\leq\varepsilon\}|\geq 1-\varepsilon$.
\end{CCW2}
This result extends to higher dimensions by considering this family of counterexamples, and extending them as functions in higher dimensions by asking that they remain constant in the additional variables. Thus no uniform sublevel set estimate holds for any operator consisting of the Laplacian in $2$ or more of the variables plus additional terms - in particular, we observe failure for the heat and wave operators in $2$ or more spatial dimensions.

We shall also see later how this result can be extended to cover the case of the heat operator in one spatial dimension.

The above result for the Laplacian is complimented by the result of Steinerberger \cite{steinerberger2019sublevel}. It states:

\vspace{0.3cm}

\begin{stei}
There exists a constant $c>0$ depending only on $n$ so that if $u:B\rightarrow\mathbb{R}$ satisfies $\Delta u\geq 1$ in $B$, where $B$ is a unit Euclidean ball in $\mathbb{R}^n$, then
$$\|u\|_{L^{\infty}(B)}\cdot|\{x\in B:|u(x)|\geq c\}|\geq c.$$
\end{stei}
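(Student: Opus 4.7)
The plan is to reduce the statement to a uniform lower bound on $\|u\|_{L^1(B)}$, following the philosophy announced in the abstract. Once we have such a lower bound, the level-set estimate falls out from the trivial decomposition $\int_B |u| = \int_{|u|<c}|u| + \int_{|u|\geq c}|u|$.

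The first step is to fix a nonnegative cutoff $\phi\in C_c^\infty(B)$ with $\phi\equiv 1$ on the concentric half-radius ball $B'\subset B$ and with $\|\Delta\phi\|_\infty \leq K_n$, a dimensional constant. Pairing against $\Delta u\geq 1$ and integrating by parts (the boundary terms vanish since $\phi$ and $\nabla\phi$ vanish near $\partial B$) gives
$$|B'| \;\leq\; \int_B \phi\,\Delta u\,dx \;=\; \int_B u\,\Delta\phi\,dx \;\leq\; K_n\,\|u\|_{L^1(B)},$$
so $\|u\|_{L^1(B)}\geq \kappa_n := |B'|/K_n$, a constant depending only on $n$.

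Now I split by a threshold $c>0$ to be chosen. Writing $M=\|u\|_{L^\infty(B)}$ and $E_c=\{x\in B:|u(x)|\geq c\}$,
$$\kappa_n \;\leq\; \|u\|_{L^1(B)} \;=\; \int_{B\setminus E_c}|u|\,dx + \int_{E_c}|u|\,dx \;\leq\; c\,|B| + M\,|E_c|.$$
Choosing $c \leq \kappa_n/(2|B|)$ absorbs the first term into half of $\kappa_n$, leaving $M\cdot|E_c|\geq \kappa_n/2$. Taking the constant appearing in the theorem to be $\min\bigl(\kappa_n/(2|B|),\,\kappa_n/2\bigr)$ concludes the proof.

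There is no serious obstacle here; the only real choice is the cutoff, and any smooth radial bump adapted to $B$ works. The conceptual point the argument is meant to highlight is that the mean-value/integration-by-parts trick converts the pointwise lower bound $\Delta u\geq 1$ into a genuine $L^1$ lower bound on $u$ itself (not just on its derivatives), which is the essential feature distinguishing the Laplacian from monomial derivatives and making a Steinerberger-type statement possible even though the classical uniform sublevel set inequality fails.
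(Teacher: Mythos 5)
Your argument is correct and is essentially the paper's own treatment: the integration-by-parts pairing against a cutoff is exactly Proposition 1.2 specialised to the self-adjoint operator $\Delta$, and the threshold decomposition is the same H\"older-type splitting the paper uses to deduce Proposition \ref{general} (of which the Steinerberger statement is the $p=\infty$ case) from the $L^1$ lower bound. No gaps; the final adjustment taking the minimum of the two constants is handled correctly.
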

This result is in essence saying that if a sublevel set for some small $\varepsilon$ is large, so that its complement is small, then $\|u\|_{L^{\infty}(B)}$ must be large. Applying this to the above family of examples, we see that $u$ must be very large somewhere on the complement of that sublevel set. The intuition for this can be seen from a basic fact which will be crucial to the main results of this paper - by considering the averages of a function $u$ over balls as a function of the radius, we find that the derivative can be quantified exactly in terms of the Laplacian of $u$, indicating that functions with large Laplacian should have ``large" variations, which can be quantified in a uniform way.

It is worth remarking that just as in the Carbery-Christ-Wright Theorem, the assumptions and hence the conclusions of the statement are invariant under replacing $u$ by $u-c$ for any real number $c$.

Steinerberger posed the basic question of whether we can replace $\|u\|_{L^{\infty}(B)}$ with some power of an $L^p$ norm. An affirmative answer to this question is given in the following result.

\vspace{0.3cm}

\begin{prop}\label{general}
Given an open, bounded $\Omega\subseteq\mathbb{R}^n$, there exists a constant $c>0$ depending only on $n$ and $\Omega$ so that if $u:\Omega\rightarrow\mathbb{R}$ satisfies $\Delta u\geq 1$ on $\Omega$, then
$$\|u\|_{L^p(\Omega)}\cdot|\{x\in \Omega:|u(x)|\geq c\}|^{1/p'}\geq c$$
for each $1\leq p\leq \infty$, and $p'$ the conjugate exponent. Note that $c$ does not depend on $p$.
\end{prop}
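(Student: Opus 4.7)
The plan is to test the hypothesis $\Delta u\geq1$ against a fixed nonnegative bump function $\phi\in C_c^\infty(\Omega)$ with $\phi\not\equiv0$, chosen once and for all depending on $\Omega$ alone. Since $\phi$ vanishes near $\partial\Omega$, integration by parts twice produces no boundary terms, giving $\int\phi\,\Delta u=\int u\,\Delta\phi$. Combined with $\Delta u\geq1$ and $\phi\geq0$, this shows that the positive constant $\int_\Omega\phi\,dx$ (depending on $\Omega$ and $n$ only) is controlled from above by $\int_\Omega |u||\Delta\phi|\,dx$. This is essentially the $L^1$ lower bound alluded to in the abstract.

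The next step is to split this weighted $L^1$ integral according to the set $S_c:=\{x\in\Omega:|u(x)|\geq c\}$. On $\Omega\setminus S_c$ the integrand is bounded pointwise by $c|\Delta\phi|$, contributing at most $c\|\Delta\phi\|_{L^1(\Omega)}$. On $S_c$ I would apply H\"older's inequality in the coarse form
$$\int_{S_c}|u||\Delta\phi|\,dx \leq \|\Delta\phi\|_{L^\infty(\Omega)}\int_{S_c}|u|\,dx \leq \|\Delta\phi\|_{L^\infty(\Omega)}\|u\|_{L^p(\Omega)}|S_c|^{1/p'}.$$
Pulling $\Delta\phi$ out in $L^\infty$, rather than pairing $u$ with $\Delta\phi$ in conjugate Lebesgue exponents, is the key to a $p$-independent constant: every $\phi$-dependent quantity that appears is itself $p$-independent.

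To close the argument I would set
$$c:=\min\!\left(\frac{\int_\Omega\phi\,dx}{2\|\Delta\phi\|_{L^1(\Omega)}},\ \frac{\int_\Omega\phi\,dx}{2\|\Delta\phi\|_{L^\infty(\Omega)}}\right),$$
a positive constant depending only on $\Omega$ and $n$. The first component guarantees $c\|\Delta\phi\|_{L^1}\leq\tfrac{1}{2}\int\phi$, so that after absorbing the $\Omega\setminus S_c$ contribution we obtain $\|u\|_{L^p(\Omega)}|S_c|^{1/p'}\geq \int\phi/(2\|\Delta\phi\|_{L^\infty})$, and the second component ensures this right-hand side is at least $c$, which is the desired conclusion. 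I do not anticipate any real obstacle: the entire argument is an integration-by-parts sandwich followed by a crude H\"older split, and the uniformity in $p$ is built in by the choice to route everything through $\|\Delta\phi\|_{L^\infty}$. The only genuine judgement is to accept a loss of sharpness in $p'$ in exchange for a constant that behaves uniformly across the full range $1\leq p\leq\infty$, which is precisely what the statement demands.
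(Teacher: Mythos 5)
Your proposal is correct and follows essentially the same route as the paper: pair $\Delta u\geq 1$ with a fixed bump function via integration by parts to get a lower bound on a weighted $L^1$ integral of $u$, then split over the superlevel set and its complement and apply H\"older with a $p$-independent choice of constant. The only (cosmetic) difference is that the paper first strips off the weight to state the unweighted $L^1$ lower bound as a separate proposition and then splits $\int_\Omega|u|$, whereas you split the weighted integral directly, bounding $|\Delta\phi|$ in $L^1$ on one piece and in $L^\infty$ on the other.
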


This result may look less interesting in the case $p=1$, since then $p'=\infty$ and this is just giving a lower bound on the $L^1$ norm. However, the case for every other $p$ follows from this case, although possibly with a different $c$. To see this, let $U_\varepsilon=\{x\in\Omega: |u(x)|<\varepsilon\}$. Then
\begin{align*}c&\leq \int_\Omega |u(x)|\,dx=\int_{\Omega\cap U_\varepsilon} |u(x)|\,dx+\int_{\Omega\cap U^c_\varepsilon} |u(x)|\,dx\\
&\leq \varepsilon|\Omega|+\|u\|_{L^p(\Omega)}\cdot|\{x\in \Omega:|u(x)|\geq \varepsilon\}|^{1/p'}
\end{align*}
where we used H\"older's inequality to bound the second integral. Taking $\varepsilon$ sufficiently small that $\varepsilon|\Omega|\leq c/2$ gives
$$c/2\leq\|u\|_{L^p(\Omega)}\cdot|\{x\in \Omega:|u(x)|\geq \varepsilon\}|^{1/p'}.$$
Note that replacing $c/2$ and $\varepsilon$ with something smaller yields a true inequality, so the result holds with $c'=\min(c/2,\varepsilon)$.

The question of whether we have a lower bound on the $L^1$ norm more generally arises. In fact, we have:

\vspace{0.3cm}

\begin{prop}
Let $D$ be a linear differential operator with smooth coefficients on an open, bounded set $\Omega\subseteq\mathbb{R}^n$. Then there exists $c$ depending only on $n$ and $\Omega$ so that whenever $u:\Omega\rightarrow\mathbb{R}$ satisfies $Du\geq 1$ on $\Omega$ we have $\|u\|_{L^1(\Omega)}\geq c$.
\end{prop}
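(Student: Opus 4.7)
The plan is to argue by duality, pairing the hypothesis $Du \geq 1$ against a fixed non-negative test function supported in $\Omega$, then integrating by parts so that every derivative falls on the test function.

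First, I fix any $\phi \in C_c^\infty(\Omega)$ with $\phi \geq 0$ and $\int_\Omega \phi \, dx = 1$; such a $\phi$ exists because $\Omega$ is open and non-empty, and may be chosen depending only on $n$ and $\Omega$ (for instance, a translated and rescaled standard bump supported in a Euclidean ball contained in $\Omega$).

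Second, since $\phi \geq 0$ and $Du \geq 1$ pointwise,
\[
1 = \int_\Omega \phi \, dx \leq \int_\Omega \phi \cdot Du \, dx.
\]
Because $\phi$ has compact support strictly inside $\Omega$, I may integrate by parts a number of times equal to the order of $D$ without picking up any boundary terms, yielding $\int_\Omega \phi \cdot Du \, dx = \int_\Omega u \cdot D^*\phi \, dx$, where $D^*$ denotes the formal adjoint of $D$. Since $D$ has smooth coefficients and $\phi$ is smooth and compactly supported, $D^*\phi$ is continuous and compactly supported in $\Omega$, so $M := \|D^*\phi\|_{L^\infty(\Omega)}$ is finite. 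Then H\"older's inequality gives
\[
1 \leq \int_\Omega u \cdot D^*\phi \, dx \leq \|u\|_{L^1(\Omega)} \cdot M,
\]
and the conclusion follows with $c = 1/M$.

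The only subtlety I expect is ensuring that $\phi$ is supported strictly inside $\Omega$ so that repeated integration by parts genuinely eliminates the boundary contributions; this is straightforward because $\Omega$ is open, so a bump function $\phi$ can be chosen whose support lies in a closed ball contained in $\Omega$. The test function $\phi$ itself is selected using only $n$ and $\Omega$, while the action of $D^*$ on this fixed $\phi$ captures whatever input from the coefficients of $D$ is needed to evaluate the constant $c$; no finer information about $D$ enters the proof.
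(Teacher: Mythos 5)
Your proof is correct and is essentially the paper's own argument: pair $Du\geq 1$ against a fixed non-negative bump $\phi$ compactly supported in $\Omega$, integrate by parts to move $D$ onto $\phi$, and bound $\int_\Omega u\,D^*\phi\,dx$ by $\|u\|_{L^1(\Omega)}\|D^*\phi\|_{L^\infty(\Omega)}$. The only difference is cosmetic (you normalise $\int_\Omega\phi\,dx=1$ rather than carrying $\|\phi\|_{L^1(\Omega)}$ into the constant), and your closing remark correctly identifies that the constant does depend on $D$ through $D^*\phi$.
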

\begin{proof}
Let $\phi$ be a non-negative smooth function with compact support in $\Omega$ with $\|\phi\|_{L^1(\Omega)}\neq 0$. Then
$$\int_\Omega\phi(x)\,dx\leq\int_\Omega Du(x)\phi(x)\,dx=\int_\Omega u(x)D^*\phi(x)\,dx\leq \|u\|_{L^1(\Omega)}\|D^*\phi\|_{L^\infty(\Omega)}.$$
Here $D^*$ denotes the adjoint operator obtained via integration by parts. We have the desired conclusion with $c=\|\phi\|_{L^1(\Omega)}/\|D^*\phi\|_{L^\infty(\Omega)}$.
\end{proof}

Thus immediately from this proposition and the preceding discussion, we have that Proposition \ref{general} and its analogue for $\Delta$ replaced by any linear differential operator with smooth coefficients holds trivially. It appears then that these results do not use much of the structure of the Laplacian and do not serve as a particularly effective counterpoint to the failure of uniform sublevel set estimates.

One could continue to ask these questions in the non-linear case - we shall make some comments on this later - however, it seems more appropriate to seek a stronger property, one that follows from uniform sublevel set estimates but does not hold in the great generality of the above result. Taking lower bounds on the $L^1$ norm as a motivating property, we consider lower bounds in other $L^p$ norms.

Of course, for $0<p<q<\infty$ we have by H\"older's inequality with exponent $q/p$ that
$$\left(\int_\Omega|u(x)|^p\,dx\right)^{1/p}\leq\left(\int_\Omega|u(x)|^q\,dx\right)^{(p/q)(1/p)}\left(\int_\Omega 1^{q/(q-p)}\,dx\right)^{((q-p)/q)(1/p)}$$
from which we see that $\|u\|_{L^p(\Omega)}\leq\|u\|_{L^q(\Omega)}|\Omega|^{(1/p)-(1/q)}$, and likewise for $q=\infty$. So lower bounds on the $L^p$ quasi-norms for smaller $p$ pose a stronger result and having already established that lower bounds in $L^1$ hold in great generality, we should be particularly interested in the spaces $L^p$ for $0<p<1$. This leads us to the central question of this paper:

\textbf{Question.} Given $p\in(0,1)$ and a (linear) differential operator $D$ on an open, bounded set $\Omega$, does there exist a constant $c_p$ depending only on $n, \Omega$ and $p$ so that whenever $Du\geq 1$ on $\Omega$, we have $\|u\|_{L^p(\Omega)}\geq c_p$?

\subsection{Main results}
Having formulated our main question, we shall give some affirmative answers. Indeed, for the Laplacian, we have:

\vspace{0.3cm}

\begin{thm}\label{LapThm}
Let $\Omega\subseteq\mathbb{R}^{n}$ be open and bounded. For each $p\in(0,1)$, there exists a constant $c_p$ depending only on $n$, $\Omega$ and $p$ such that whenever a function $u:\Omega\rightarrow\mathbb{R}$ satisfies $\Delta u\geq 1$ on $\Omega$ we have the estimate
$$\|u\|_{L^p(\Omega)}\geq c_p.$$
\end{thm}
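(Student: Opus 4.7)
The plan is to combine the $L^1$ lower bound from Proposition 2 with the classical reverse $L^p$-$L^\infty$ estimate for nonnegative subharmonic functions, using the latter to control $u_+$ and using the mean value inequality to handle $u_-$ indirectly through a dichotomy argument.

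First I would reduce to a ball: since $\|u\|_{L^p(\Omega)} \geq \|u\|_{L^p(B')}$ for any $B' \subseteq \Omega$, it suffices to produce the bound on a single ball $B' \subset B \subset\subset \Omega$ of fixed geometry with common center $x_0$. Two tools are then available, both with constants independent of $u$: (i) Proposition 2 applied to $\Delta$ on $B'$, via a fixed smooth nonnegative test function compactly supported in $B'$, gives $\|u\|_{L^1(B')} \geq c_0 > 0$; and (ii) the function $w = u - |x-x_0|^2/(2n)$ satisfies $\Delta w = \Delta u - 1 \geq 0$ on $\Omega$, so $w$ and $w_+$ are subharmonic, and the standard reverse $L^p$-$L^\infty$ inequality for nonnegative subharmonic functions applied to $w_+$ (together with the pointwise bound $w_+ \leq u_+ + |x-x_0|^2/(2n)$ on $B$) yields
\[
\|u_+\|_{L^\infty(B')} \;\leq\; K_1\bigl(\|u\|_{L^p(\Omega)}^p + 1\bigr)^{1/p},
\]
with $K_1$ depending on $n$, $p$, $B$, $B'$ only.

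Write $\delta := \|u\|_{L^p(\Omega)}^p$ and split $\int_{B'} |u| = \int_{B'} u_+ + \int_{B'} u_- \geq c_0$. In the easier case where $\int_{B'} u_+ \geq c_0/2$, the interpolation inequality $\int u_+ \leq \|u_+\|_{L^\infty}^{1-p} \int u_+^p$ combined with (ii) gives $c_0/2 \leq K_1^{1-p}(\delta + 1)^{(1-p)/p}\delta$, which forces $\delta$ to be bounded below by a positive constant. The harder case $\int_{B'} u_- \geq c_0/2$ lacks any analogous direct $L^\infty$ control on $u_-$; here I would let $M := \|u_-\|_{L^\infty(B')} = -u(x^*)$ for some $x^* \in B'$ and exploit the mean value inequality
\[
\frac{1}{|B(x^*, r)|}\int_{B(x^*, r)} u(y)\,dy \;\geq\; u(x^*) + \frac{r^2}{2(n+2)},
\]
applied with $r$ of order $\sqrt{M}$. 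When $B(x^*, r) \subset B$, this yields $\int_{B(x^*, r)} u_+ \gtrsim M^{1+n/2}$, which combined with the $L^\infty$ bound (ii) forces $M \lesssim \delta^{2/(n+2)}$; feeding this back into $\int u_- \leq M^{1-p}\int u_-^p$ closes the argument. For $M$ too large for such an $r$ to fit, I would fall back on elliptic propagation (a Harnack-type inequality) applied to the nonnegative superharmonic function $K_1(\delta+1)^{1/p} + |x-x_0|^2/(2n) - u$ to show that $|u| \gtrsim M$ holds on a ball of fixed radius around $x^*$, directly producing a positive contribution to $\int |u|^p$.

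The main obstacle is this second case: the one-sided inequality $\Delta u \geq 1$ gives subharmonic-type upper control on $u_+$ but not directly on $u_-$, so a deep valley of $u$ at an interior minimum must be traded, via the mean value identity, for a large positive hill on a surrounding ball; balancing the resulting interpolation exponents so they pin $\delta$ from below, and verifying that the Harnack-type propagation covers the remaining very-large-$M$ regime uniformly in $u$, is where I expect the bulk of the technical work to lie.
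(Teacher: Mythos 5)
Your two main tools are the right ones --- the growth formula for spherical averages (which gives both the mean value inequality for $u_+$ and the quantitative drop $\fint_{B_r(x^\ast)}u\geq u(x^\ast)+r^2/(2(n+2))$) and the reverse $L^p$--$L^\infty$ inequality for nonnegative subharmonic functions --- and your Case A and the small-$M$ part of Case B are sound. The genuine gap is the very-large-$M$ fallback. The weak Harnack inequality for nonnegative supersolutions controls an $L^q$ \emph{average} by the \emph{infimum}; it does not propagate a large value at a single point outward. Indeed a nonnegative superharmonic function can have an arbitrarily tall, arbitrarily narrow peak: take $v=M'\,(c_n|x-x^\ast|^{2-n})\ast\rho_\varepsilon$, a mollified Newtonian potential, which is smooth, superharmonic, of size $\approx M'\varepsilon^{2-n}$ at $x^\ast$ but $\ll M'\varepsilon^{2-n}$ once $|x-x^\ast|\gg\varepsilon$. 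Correspondingly, $u=|x-x_0|^2/(2n)-v$ satisfies $\Delta u=1+M'\rho_\varepsilon\geq 1$ and has $u(x^\ast)=-M$ with $M$ arbitrarily large as $\varepsilon\to 0$, while $|u|$ stays bounded outside a tiny ball. So the claim that $|u|\gtrsim M$ on a ball of \emph{fixed} radius around $x^\ast$ is false, and the deep-valley regime is not closed as written. (It is plausible that Case B can still be salvaged --- in these peak examples one can check that $\int u_-^p$ cannot be small while $\int u_-\geq c_0/2$ --- but that requires a different estimate, e.g.\ bounding $\int u_-^p$ directly rather than passing through $\|u_-\|_\infty$.)

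For comparison, the paper's proof avoids ever controlling $u_-$ in $L^\infty$ by running a different dichotomy: fix nested regions $\Omega_{R_1+R_2}\subseteq\Omega_{R_1}\subseteq\Omega$ of positive measure and a threshold $c>0$ with $c-K_nR_2^2<0$. Either $u>c$ at some point of $\Omega_{R_1}$, in which case the $L^p$ mean value inequality for $u_+^p$ at that point (your tool (ii), applied on a ball of \emph{fixed} radius $R_1$) gives $\int_\Omega|u|^p\gtrsim c^pR_1^n$; or $u\leq c$ on all of $\Omega_{R_1}$, in which case the growth formula forces $u\leq c-K_nR_2^2<0$ on all of $\Omega_{R_1+R_2}$, so $|u|$ is bounded below on a set of fixed positive measure and the $L^p$ bound is immediate. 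This sidesteps both the $L^1$ lower bound and the $u_+/u_-$ splitting, and in particular never meets the large-$M$ obstruction. You may want to restructure your argument along these lines rather than repair the Harnack step.
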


Moreover, our proof is not a simple existence proof, but rather it illustrates how one can obtain such a constant - although we can say nothing of optimality. The expression for the constant is rather complicated and expressed in terms of a few parameters that arise in the proof, which will be summarised in the final steps of the proofs in section \ref{complete} for reference.

Our proof uses much structure special to functions having $\Delta u \geq 1$. Firstly, we use the derivative formula for averages over balls as mentioned earlier, but we also need a generalisation of the  mean value inequality for subharmonic functions.

Roughly speaking, a function is subharmonic on $\Omega$ if its value at each point $x$ is bounded above by its averages over all balls centred at $x$. By the derivative formula, it will easily be seen that for $C^2$ functions, this is exactly when $\Delta u \geq 0$ throughout $\Omega$. It is known that for locally bounded, non-negative subharmonic functions, we obtain a mean value inequality for $f^p$, $0<p<1$, up to a constant.

These properties have suitable generalisations to the heat operator, and as a result the proof generalises, although we note that in the case of the averages considered, we require some slightly non-standard modifications to get around some technical issues with the usual family of averages for the heat operator.

Let us denote points in $\mathbb{R}^{n+1}$ by $(x,t)\in\mathbb{R}^n\times\mathbb{R}$ and use $\Delta_x$ for the Laplacian in the first $n$ components. The heat operator is $H=\Delta_x-\partial_t$. We have the following:

\vspace{0.3cm}

\begin{thm}\label{HeatThm}
Let $\Omega\subseteq\mathbb{R}^{n+1}$ be open and bounded. For each $p\in(0,1)$, there exists a constant $c_p$ depending only on $n$, $\Omega$ and $p$ such that whenever a function $u:\Omega\rightarrow\mathbb{R}$ satisfies $Hu\geq 1$ on $\Omega$ we have the estimate
$$\|u\|_{L^p(\Omega)}\geq c_p.$$
\end{thm}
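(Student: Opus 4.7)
I will mirror the Laplacian argument sketched in the introduction. Two parabolic analogues of the key ingredients are required: (i) a family of ``caloric averages'' $M_{(x,t)}(r)$ of $u$ over regions $E(x,t;r) \subseteq \mathbb{R}^{n+1}$ of parabolic scale $r$, for which $\frac{d}{dr} M_{(x,t)}(r)$ evaluates as a positive factor times an average of $Hu$, so that $Hu \geq 1$ integrates up to a quantitative growth inequality $M_{(x,t)}(r) \geq u(x,t) + c_n r^2$; and (ii) a mean value inequality $v(x,t)^p \leq C_p \cdot \mathrm{avg}_{E(x,t;r)} v^p$ for any non-negative $v$ satisfying $Hv \geq 0$ and any $p \in (0,1)$, whenever $E(x,t;r) \subseteq \Omega$.

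Granted (i) and (ii) the contradiction argument copies the Laplacian proof almost verbatim. Since $u_+ = \max(u,0)$ satisfies $Hu_+ \geq 0$ whenever $u$ does, applying (ii) on a nested chain $Q_0'' \Subset Q_0' \Subset \Omega$ of parabolic regions yields a pointwise bound
\[ u_+(x,t) \leq K\|u\|_{L^p(\Omega)} \quad \text{for all } (x,t) \in Q_0', \]
with $K = K(n,p,\Omega)$. For $(x,t) \in Q_0''$ chosen so that $E(x,t;r') \subseteq Q_0'$, combining this pointwise bound on $u_+$ with the growth estimate from (i) gives
\[ u(x,t) \leq \mathrm{avg}_{E(x,t;r')} u - c_n r'^2 \leq K\|u\|_{L^p(\Omega)} - c_n r'^2. \]
If $\|u\|_{L^p(\Omega)}$ is small enough that $K\|u\|_{L^p(\Omega)} \leq \tfrac12 c_n r'^2$, then $u \leq -\tfrac12 c_n r'^2$ throughout $Q_0''$, forcing $\|u\|_{L^p(\Omega)}^p \geq |Q_0''| (\tfrac12 c_n r'^2)^p$; this contradicts the initial smallness once one pushes $\|u\|_{L^p(\Omega)}$ below the resulting threshold, yielding the desired $c_p$.

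The main obstacle is the joint construction of the averaging regions $E(x,t;r)$. The natural candidate --- the Watson--Fulks heat ball $\{(y,s): \Gamma(x-y,t-s) \geq r^{-n}\}$ --- is one-sided in time: it extends only backward from $(x,t)$, with the base point sitting on its top boundary. This makes the nested chain $Q_0'' \Subset Q_0'$ awkward to realise and leaves uncomfortable boundary contributions when differentiating $M_{(x,t)}(r)$. The non-standard modification alluded to in the introduction is presumably a truncated or symmetrised parabolic region that still admits a clean derivative formula for (i) while preserving the subcaloric $L^p$ mean value inequality (ii), the latter via the classical reverse Hölder / maximal-function argument used in the subharmonic case, adapted to the parabolic setting. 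Once the right family of regions is fixed, the remaining calculations are essentially mechanical and the explicit constant $c_p$ can be traced through the four scales $r, r', |Q_0'|, |Q_0''|$ together with the constants from (i) and (ii).
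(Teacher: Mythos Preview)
Your overall strategy is correct and matches the paper's: a derivative formula for heatball averages gives the quantitative growth estimate (your (i)), a subcaloric $L^p$ mean value inequality for $u_+$ gives the pointwise control (your (ii)), and the two combine via a dichotomy (the paper phrases it as two cases rather than a contradiction, but that is cosmetic).

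However, you have misdiagnosed the technical obstacle, and this is a genuine gap. The one-sidedness of the Watson heatball causes no trouble at all: the derivative formula
\[
\phi'(r)=\frac{n}{r^{n+1}}\int_{E(x,t;r)}Hu(y,s)\,\log\bigl(r^n\Phi_n(x-y,t-s)\bigr)\,dy\,ds
\]
is clean with no boundary terms, and the fact that $(x,t)$ sits on $\partial E(x,t;r)$ is handled, as the paper notes, simply by passing to a slightly larger box $\tilde B\supset E_m(0,0;1)$ containing the origin before running the $f\mapsto f^p$ argument. The actual difficulty is that the Watson heatball average carries the \emph{unbounded} weight $|x-y|^2/(t-s)^2$, and this unbounded kernel is what breaks the Pavlovi\'c-type argument for passing from a mean value inequality for $f$ to one for $f^p$. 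Your proposed ``truncated or symmetrised'' region does not address this, and a reverse H\"older or maximal-function route would have to confront the same singularity.

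The paper's fix is Kuptsov's modified heatballs: extend $u$ trivially in $m\geq 3$ extra spatial variables, apply the $(m+n)$-dimensional heatball average, and integrate out the dummy variables. This produces a \emph{bounded} kernel $\kappa_{m,n}$ on the projected region $E_m(x,t;r)$, so that $u_+(x,t)\leq M_{m,n}\,r^{-(n+2)}\int_{E_m(x,t;r)}u_+$. With a bounded kernel in hand, the paper runs Pavlovi\'c's elementary argument (not reverse H\"older), generalised to the parabolic dilation $(y,s)\mapsto(ry,r^2s)$, to obtain the $L^p$ mean value inequality for $u_+^p$. After that, your final assembly goes through exactly as you describe.
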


As we expect, lower bounds on the $L^p$ quasi-norms follow from uniform sublevel set estimates, since we know that each fixed sublevel set is uniformly small, and hence its complement is uniformly large, which also gives a uniform largeness of the $L^p$ quasi-norm. Indeed, by Chebyshev's inequality, we have for each $0<p<\infty$ and $\varepsilon>0$
$$\varepsilon|\{x\in\Omega:|u(x)|\geq\varepsilon\}|^{1/p}\leq\|u\|_{L^p(\Omega)}.$$
Hence if $u$ is in the class of functions so that $|\{x\in\Omega:|u(x)|\leq\varepsilon\}|\leq C\varepsilon^\delta$, we have $|\{x\in\Omega:|u(x)|\geq\varepsilon\}|\geq |\Omega|-C\varepsilon^\delta\geq |\Omega|/2$ for some small choice of $\varepsilon$, and then we have the desired inequality with $\varepsilon(|\Omega|/2)^{1/p}$.

This creates a strict hierarchy of problems. We have that uniform oscillatory integral estimates imply uniform sublevel set estimates, and these imply lower bounds on each $L^p$ norm, with the $L^1$ case being weak enough that it includes the case of all linear differential operators with sufficiently regular coefficients. It is not yet clear how the case of $L^p$ for $0<p<1$ fits in - does a uniform lower bound on the $L^p$ norm for functions satisfying $Du\geq 1$ hold for any linear differential operator, say with smooth or constant coefficients, or do these results imply something of serious interest regarding the structure of such functions?

In this direction, we shall consider the ``$L^p$ means" for $p\leq 0$, and note some connections to the uniform sublevel set problem which provide evidence for the naturality of our questions.

It is clear, at least, that the proofs in this paper are crucially linked with properties of the Laplacian and heat operators which do not hold in general. It is also worth remarking that we state some intermediate results, some of which are reformulations of existing results to suit this paper, others generalising existing results in a new context, which may suggest further applicability. In particular, we give a straightforward generalisation of a result of Pavlovi\'c \cite{pavlovic1994subharmonic} on passing from mean value inequalities for non-negative functions $f$ to mean value inequalities for $f^p$, $0<p<1$, originally stated for Euclidean balls but which can be extended to deal with different dilation structures, such as the parabolic scaling we will use when considering the heat operator.

The paper will be structured as follows. Section \ref{proofs} contains the proofs of the main results, as well as some other results that are required in the proof and may be of interest more generally. Section \ref{disc} expands on some topics mentioned in this introduction that merit some more in-depth discussion and proofs. Section \ref{ques} collects some questions appearing elsewhere in the paper and poses some further naturally-arising questions.

\section{Proofs of the main results}\label{proofs}
Both of the main theorems will be proven using the same scheme, with the heat operator case requiring some more involved calculations and some more complicated averages, but otherwise nothing inherently more sophisticated. The key ingredient in both cases is a formula for the growth rate of an appropriate family of averages. These formulae are often absorbed into proofs of mean value formulae, so for the sake of completeness we produce them here in a way that emphasises their applicability.

The idea of the proof is as follows. Once we have these formulae for the growth rates in terms of the Laplacian/heat operator, we can say that $u\leq c$ on $\Omega$ implies $u\leq c-d$ on some fixed smaller set, for some $d>0$ depending on the domain but not on $u$ or $c$. If we pick $c>0$ so that $c-d$ is negative, it now follows that either $u>c$ somewhere in $\Omega$ or $u<c-d<0$ everywhere on some fixed smaller set. The latter case trivially gives a lower bound on the $L^p$ norm for any $0<p\leq\infty$.

To deal with the former case, we suppose that there is a point $x$ with $u(x)>c$ that is some fixed distance $R$ from the boundary - just apply the above argument with a different initial set, obtaining a different value of $d$. In turn we choose a different $c$. We then use the basic consequence of the derivative formula - the mean value inequality - which we shall see implies a mean value inequality for $u_+^p$ (where $u_+$ denotes the positive part of $u$). Since $R>0$ is fixed, rearranging the resulting inequality gives a lower bound on the $L^p$ quasi-norm in some subset of $\Omega$, which completes the proof.

In the following proofs $u$ will denote a smooth function\footnote{In fact $C^2$ is enough for this proof, but in view of the remarks of section \ref{exten} on how uniformity allows us to take limits in the inequality, we needn't be too precise.} on a bounded open set $\Omega$.

\subsection{Proof of Theorem \ref{LapThm}, first part}
The derivative formula for the Laplacian is entirely routine and well-known. 

\vspace{0.3cm}

\begin{prop}
Consider for $\overline{B_R(x)}\subseteq\Omega$ the function $\phi:[0,R]\rightarrow\mathbb{R}$ given by $\phi(0)=u(x)$ and the average
$$\frac{1}{|B_r|}\int_{B_r(x)}u(y)\,dy$$
for $0<r\leq R$. Clearly $\phi$ is continuous on $[0,R]$, and on the open interval $(0,R)$ we have
\begin{equation}\label{deriv1}
\phi'(r)=\frac{1}{|B_r|}\int_{B_r(x)}\frac{r^2-|x-y|^2}{2r}\Delta u(y)\,dy.
\end{equation}
\end{prop}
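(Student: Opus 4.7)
My plan is to prove continuity at $r = 0$ first (which is automatic from continuity of $u$ and the standard fact that ball averages converge to the pointwise value at the center), and then focus on the derivative formula on $(0, R)$, which is the substantive part. The scheme is: first rescale the ball to the unit ball in order to differentiate $\phi$ under the integral sign, then change variables back, then integrate by parts against a carefully chosen vanishing weight to convert gradients of $u$ into Laplacians of $u$.

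Concretely, I would write
\begin{equation*}
\phi(r) = \frac{1}{|B_1|}\int_{B_1(0)} u(x + rz)\,dz,
\end{equation*}
so that differentiation under the integral sign gives
\begin{equation*}
\phi'(r) = \frac{1}{|B_1|}\int_{B_1(0)} \nabla u(x + rz) \cdot z\,dz.
\end{equation*}
Changing variables back via $y = x + rz$ yields
\begin{equation*}
\phi'(r) = \frac{1}{|B_r|\,r}\int_{B_r(x)} \nabla u(y) \cdot (y - x)\,dy.
\end{equation*}
This holds as long as $u \in C^1$ in a neighbourhood of $\overline{B_R(x)}$, which is guaranteed by smoothness. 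The uniform continuity of $\nabla u$ justifies the differentiation under the integral sign on the open interval $(0,R)$.

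The key observation that converts this into an expression involving $\Delta u$ is that $y - x = -\tfrac{1}{2}\nabla w(y)$ for the weight $w(y) = r^2 - |y - x|^2$, which vanishes on $\partial B_r(x)$. Integration by parts then gives
\begin{equation*}
\int_{B_r(x)} \nabla u(y) \cdot (y-x)\,dy = -\tfrac{1}{2}\int_{B_r(x)} \nabla u \cdot \nabla w\,dy = \tfrac{1}{2}\int_{B_r(x)} w\,\Delta u(y)\,dy,
\end{equation*}
where the boundary term $\int_{\partial B_r(x)} w\,\partial_\nu u\,dS$ drops because $w \equiv 0$ on $\partial B_r(x)$. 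Substituting $w(y) = r^2 - |y-x|^2$ and dividing by $r$ gives exactly \eqref{deriv1}.

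There is no genuine obstacle here; the only thing to be slightly careful about is the sign when using $y - x = -\tfrac{1}{2}\nabla w$ and the direction in which to integrate by parts (one wants the boundary term to vanish, which forces the Laplacian to fall on $u$ rather than on $w$). The entire argument is local and uses only that $\overline{B_R(x)} \subseteq \Omega$ so that $u$ is smooth on a neighbourhood of the closed ball.
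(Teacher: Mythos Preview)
Your proof is correct. Both you and the paper begin identically: rescale to the unit ball, differentiate under the integral, and change variables back to reach
\[
\phi'(r)=\frac{1}{|B_r|\,r}\int_{B_r(x)}\nabla u(y)\cdot(y-x)\,dy.
\]
From here the two arguments diverge. The paper writes this integral in polar coordinates, applies the divergence theorem on each sphere $\partial B_s(x)$ to convert $\nabla u\cdot\nu$ into $\int_{B_s(x)}\Delta u$, then decomposes the resulting solid integral again in polar coordinates and uses Fubini to collapse the double radial integral into the weight $(r^2-|x-y|^2)/2r$. Your route is more direct: you recognise $y-x=-\tfrac12\nabla w$ for $w(y)=r^2-|y-x|^2$, and a single integration by parts (Green's first identity) with $w$ vanishing on $\partial B_r(x)$ lands immediately on the target formula. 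Your approach avoids the polar decomposition and Fubini step entirely, at the cost of needing to spot the auxiliary function $w$; the paper's approach is more mechanical but longer. Both are standard and yield the same constant.
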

Here $B_r(x)$ denotes the Euclidean ball of radius $r$ centred at $x$, and $|B_r|$ is the Lebesgue measure of the ball.
\begin{proof}
Using the change of variables $y=x-r\tilde{y}$ we have
$$\phi(r)=\frac{1}{|B_1|}\int_{B_1(0)}u(x-r\tilde{y})\,d\tilde{y}.$$
Differentiating under the integral and changing variables again we have 
\begin{align*}
\phi'(r)&=\frac{1}{|B_1|}\int_{ B_1(0)}\nabla u(x-r\tilde{y})\cdot (-\tilde{y})\,d\tilde{y}\\
&=\frac{1}{|B_r|}\int_{ B_r(0)}\nabla u(y)\cdot \frac{y-x}{r}\,dy.
\end{align*}
Now using polar coordinates, with $\sigma_s$ being the induced surface measure on the sphere of radius $s$, we have
\begin{align*}
\phi'(r)&=\frac{1}{|B_r|}\int_0^r\int_{\partial B_r(x)}\nabla u(y)\cdot \frac{y-x}{r}\,d\sigma_s(y)\,ds\\
&=\frac{1}{|B_r|}\int_0^r\frac{s}{r}\int_{\partial B_s(x)}\nabla u(y)\cdot \frac{y-x}{s}\,d\sigma_s(y)\,ds\\
&=\frac{1}{|B_r|}\int_0^r\frac{s}{r}\int_{B_s(x)}\Delta u(y)\,dy\,ds
\end{align*}
where in the last step we used Gauss' Divergence Theorem. We again apply polar coordinates to the inner integral and apply Fubini's Theorem.
\begin{align*}
\phi'(r)&=\frac{1}{|B_r|}\int_0^r\frac{s}{r}\int_0^s\int_{\partial B_t(x)}\Delta u(y)\,d\sigma_t(y)\,dt\,ds\\
&=\frac{1}{|B_r|}\int_0^r\int_t^r\frac{s}{r}\int_{\partial B_t(x)}\Delta u(y)\,d\sigma_t(y)\,ds\,dt\\
&=\frac{1}{|B_r|}\int_0^r\int_{\partial B_t(x)}\frac{r^2-t^2}{2r}\Delta u(y)\,d\sigma_t(y)\,dt.
\end{align*}
Noting $t=|x-y|$ on $\partial B_t(x)$ completes the calculation.
\end{proof}

For later reference, we note the following immediate corollary:

\vspace{0.3cm}

\begin{cor}\label{LapCor}
Whenever $u:\Omega\rightarrow\mathbb{R}$ satisfies $\Delta u\geq 0$ in $\Omega$ and $\overline{B_R(x)}\subseteq\Omega$, we have
$$u(x)\leq\frac{1}{|B_R|}\int_{B_R(x)}u(y)\,dy$$
and the same inequality is also true for $u$ replaced with $u_+(x)=\max(u(x),0)$
\end{cor}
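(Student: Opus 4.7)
The first inequality is essentially immediate from the derivative formula (\ref{deriv1}). My plan is to observe that on $B_r(x)$ the weight $(r^2 - |x-y|^2)/(2r)$ is non-negative, and by hypothesis $\Delta u \geq 0$. Hence the integrand in (\ref{deriv1}) is non-negative, so $\phi'(r) \geq 0$ on $(0,R)$. Combined with the continuity of $\phi$ on $[0,R]$ asserted in the proposition, this gives $\phi(0) \leq \phi(R)$, i.e.
$$u(x) \leq \frac{1}{|B_R|} \int_{B_R(x)} u(y)\, dy.$$

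For the statement about $u_+$, a direct application of the derivative formula is unavailable because $u_+$ need not be $C^2$ (or even $C^1$) where $u$ vanishes, so I would proceed by a simple case split rather than trying to reprove a derivative formula for $u_+$. If $u(x) \leq 0$, then $u_+(x) = 0$, and since $u_+ \geq 0$ pointwise its average over $B_R(x)$ is non-negative, so the inequality is trivial. If $u(x) > 0$, then $u_+(x) = u(x)$, and combining the inequality just established with the pointwise bound $u(y) \leq u_+(y)$ gives
$$u_+(x) = u(x) \leq \frac{1}{|B_R|}\int_{B_R(x)} u(y)\,dy \leq \frac{1}{|B_R|}\int_{B_R(x)} u_+(y)\,dy,$$
as required.

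There isn't really a serious obstacle here; the only mild subtlety is remembering that $u_+$ is not smooth, so one should not attempt to verify the mean value property for $u_+$ by differentiating its averages, and instead reduce to the already-established inequality for $u$ via a case split together with monotonicity of the integral against the pointwise inequality $u \leq u_+$.
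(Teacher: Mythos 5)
Your proof is correct and matches the paper's: the first inequality follows from the non-negativity of the weight $(r^2-|x-y|^2)/(2r)$ in the derivative formula, and the $u_+$ statement is handled exactly as the paper does, by noting that the average of $u_+$ dominates both the average of $u$ and $0$. No issues.
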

The second statement follows from the first because the average of $u_+$ is no less than the average of $u$, and $0$ is always no greater than the average of the non-negative function $u_+$.

\vspace{0.3cm}

\begin{claim}\label{LapClaim}
Let $u:\Omega\rightarrow\mathbb{R}$ satisfy $\Delta u\geq 1$ in $\Omega$ and let $\Omega_R$ denote the set of points $x$ in $\Omega$ so that $\overline{B_R(x)}\subseteq\Omega$. Then if $u\leq c$ on $\Omega$, we have that $u\leq c-K_nR^2$ on $\Omega_R$, where
$$K_n=1/(2n+4)$$
\end{claim}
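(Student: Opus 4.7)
The plan is to apply the derivative formula (\ref{deriv1}) directly. Fix $x\in\Omega_R$, so that $\overline{B_R(x)}\subseteq\Omega$, and let $\phi$ be the averaging function from the proposition. Since $\Delta u\geq 1$ on $\Omega$ and the weight $(r^2-|x-y|^2)/(2r)$ is non-negative on $B_r(x)$, the formula gives the pointwise bound
$$\phi'(r)\geq\frac{1}{|B_r|}\int_{B_r(x)}\frac{r^2-|x-y|^2}{2r}\,dy.$$

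The next step is just an explicit computation of this integral. Passing to polar coordinates reduces it to an elementary one-variable integral in $s\in[0,r]$, and after cancelling the $|B_1|r^n$ that appears in both the numerator and denominator one finds, without any cleverness, that $\phi'(r)\geq r/(n+2)$. Integrating this on $[0,R]$ and using continuity of $\phi$ at $0$ (so that $\phi(0)=u(x)$) gives
$$\frac{1}{|B_R|}\int_{B_R(x)}u(y)\,dy-u(x)=\phi(R)-\phi(0)\geq\frac{R^2}{2(n+2)}.$$

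Finally, the hypothesis $u\leq c$ on $\Omega$ makes the average on the left at most $c$, so $u(x)\leq c-R^2/(2n+4)$, which is exactly the claim with $K_n=1/(2n+4)$. Since $x\in\Omega_R$ was arbitrary, we are done.

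I do not see a real obstacle here: the statement is essentially the quantitative content of the derivative formula once one plugs in the lower bound $\Delta u\geq 1$. The only thing requiring care is the constant, which comes from evaluating $\int_0^r(r^2 s^{n-1}-s^{n+1})\,ds=\frac{2r^{n+2}}{n(n+2)}$ and simplifying; matching this against the normalisation $|B_r|=|B_1|r^n$ produces the factor $1/(2n+4)$ quoted in the claim.
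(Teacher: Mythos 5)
Your proof is correct and follows essentially the same route as the paper: lower-bound $\phi'(r)$ by $r/(n+2)$ via the derivative formula and the assumption $\Delta u\geq 1$, integrate over $[0,R]$, and use $\phi(R)\leq c$. The constant computation ($\int_0^r(r^2s^{n-1}-s^{n+1})\,ds=\tfrac{2r^{n+2}}{n(n+2)}$ together with $|\partial B_1|=n|B_1|$) matches the paper's.
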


\begin{proof}
This follows from the derivative formula (\ref{deriv1}). We shall denote by $|\partial B_s|=|\partial B_1|s^{n-1}$ the total surface measure of the sphere of radius $s$, and using the assumption $\Delta u\geq 1$, we have that the derivative $\phi'(r)$ is bounded below by
\begin{align*}
\frac{1}{|B_r|}\int_0^r|\partial B_1|s^{n-1}\frac{r^2-s^2}{2r}\,ds&=\frac{|\partial B_1|}{|B_r|}\left(\frac{r^{n+2}}{2rn}-\frac{r^{n+2}}{2r(n+2)}\right)\\
&=\frac{|\partial B_1|}{|B_1|}\left(\frac{1}{2n}-\frac{1}{2(n+2)}\right)r\\
&=n\left(\frac{1}{2n}-\frac{1}{2(n+2)}\right)r\\
&=\frac{r}{n+2}.
\end{align*}
Now whenever $x\in\Omega_R$, we have by the fundamental theorem of calculus that $u(x)=\phi(0)=\phi(R)-\int_0^R\phi'(r)\,dr$, which along with $\Delta u\geq 1$ gives
\begin{align*}
u(x)&=\frac{1}{|B_R|}\int_{B_R(x)}u(y)\,dy-\int_0^R\frac{1}{|B_r|}\int_{B_r(x)}\frac{r^2-|x-y|^2}{2r}\Delta u(y)\,dy\,dr\\
&\leq c-\int_0^R\frac{r}{n+2}\,dr\leq c-\frac{R^2}{2n+4}=c-K_nR^2.
\end{align*}
\end{proof}

Corollary \ref{LapCor} paired with Claim \ref{LapClaim} and the discussion of mean value inequalities in section \ref{mvi} are the only ingredients used in the proof of Theorem \ref{LapThm}.

\subsection{Proof of Theorem \ref{HeatThm}, first part}\label{heatstart}
For the heat operator case, we shall first define some notation. Let $\Phi_n$ be the standard heat kernel on $\mathbb{R}^{n+1}$,
$$\Phi_n(x,t):=\frac{1}{(4\pi t)^{n/2}}e^{-|x|^2/4t}.$$
The heatball centred at $(x,t)$ of radius $r>0$ is the compact set
$$E(x,t;r):=\{(x,t)\}\cup\{(y,s)\in\mathbb{R}^{n+1}:s\leq t,\,\Phi_n(t-s,x-y)\geq 1/r^n\}.$$
Note that except for at $(x,t)$, $\Phi_n(t-s,x-y)=1/r^n$ on the boundary.

For convenience we shall also use $E(r)$ to denote $\{(x,t):\Phi_n(x,t)\geq 1/r^n\}$. Notice that $|E(x,t;r)|=|E(r)|=r^{n+2}|E(1)|$, a fact that follows from the parabolic scaling $(y,s)\mapsto (ry,r^2s)$ taking $E(1)$ to $E(r)$.

Whenever the heatball $E(x,t;R)$ is in $\Omega$, we shall consider the functions $\phi:[0,R]\rightarrow \mathbb{R}$ given by $\phi(0)=u(x,t)$ and
$$\phi(r)=\frac{1}{4r^n}\int_{E(x,t;r)}u(y,s)\frac{|x-y|^2}{(t-s)^2}\,dy\,ds$$
for $0<r\leq R$. These averages were considered in a paper of Watson \cite{watson1973theory}, in which a theory of subtemperatures, analogous to the theory of subharmonic functions, was developed.

In the paper of Watson \cite{watson1973theory}, it is seen that
$$\frac{1}{4r^n}\int_{E(x,t;r)}\frac{|x-y|^2}{(t-s)^2}\,dy\,ds=1$$
which is where the normalisation in the above definition comes from. However, as the precise normalisation will not be important for us, and in order to give a self-contained treatment in this paper, the reader may wish instead to think of the factor of $4r^n$ being written as
$$V(r):=\int_{E(r)}\frac{|y|^2}{s^2}\,dy\,ds=r^nV(1).$$
The equality $V(r)=r^nV(1)$ follows from parabolic scaling. That this quantity is finite can be seen quite easily in higher dimensions, and the techniques we will use are suggestive of methods used to account for unboundedness of the kernel of the averages, $|x-y|^2/(V(r)(t-s)^2)$. In fact, we will only need this result in higher dimensions.

By rearranging the formula defining the level set in $E(r)$, we can give the equivalent expression $\{(y,s):0<s\leq r^2/4\pi,|y|\leq(2ns\log(r^2/4\pi s))^{1/2}\}$. The integral becomes
\begin{align*}
V(r)&=\int_0^{r^2/4\pi}\int_{|y|\leq(2ns\log(r^2/4\pi s))^{1/2}}\frac{|y|^2}{s^2}\,dy\,ds\\
&=\int_0^{r^2/4\pi}\int_{|y|\leq 1}\frac{|y|^2}{s^2}(2ns\log(r^2/4\pi s))^{(n+2)/2}\,dy\,ds\\
&=\int_{|y|\leq 1}|y|^2\,dy\,\int_0^{r^2/4\pi}\left(2ns^{1-\frac{4}{n+2}}\log(r^2/4\pi s)\right)^{(n+2)/2}\,ds.
\end{align*}
The $y$ integral is clearly finite and easy to compute. For $n\geq 3$, the integrand in the $s$ integral is bounded, extending continuously to $s=0$ - indeed, it is clear that $s^{1-\frac{4}{n+2}}\log(r^2/4\pi s)$ tends to $0$ as $s\rightarrow 0$.

Now, we have already observed that the kernel for these averages is unbounded, which will cause problems when we try to establish mean value inequalities for $u_+^p$. Nevertheless, we will later consider a modified kernel which is bounded, and hence the associated measure can be controlled by a multiple of the Lebesgue measure, so that the arguments of section \ref{complete} will work. The approach will be to add in some extra spatial variables, apply the derivative formula in higher dimensions, but integrate out the extra variables appearing in the kernel to get a new, bounded kernel in lower dimensions, in a manner not unlike the above calculation. 

First, however, we shall give a proof of the derivative formula for heatballs. This proof is due to Evans \cite{evans10}, but it is not explicitly given there, instead being absorbed into the proof of the corresponding mean value formula for the heat equation. 

\vspace{0.3cm}

\begin{prop}
The function $\phi$ defined above is continuous on $[0,R]$, and in the interval $(0,R)$ its derivative is given by
\begin{equation}\label{deriv2}
\phi'(r)=\frac{n}{r^{n+1}}\int_{E(x,t;r)}Hu(y,s)\log(r^n\Phi_n(x-y,t-s))\,dy\,ds.
\end{equation}
\end{prop}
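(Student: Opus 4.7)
My plan is to follow the same rescaling-and-integration-by-parts strategy that worked for the Laplacian case in equation (\ref{deriv1}), exploiting the special vanishing property of the function $\psi(y,s):=\log(r^n\Phi_n(x-y,t-s))$ on the boundary of the heatball $E(x,t;r)$. The parabolic scaling $(y,s)=(x+r\tilde y, t+r^2\tilde s)$ sends $E(x,t;r)$ to the fixed set $E(1)$ (independent of $r$) and transforms $\phi$ into
\[
\phi(r)=\frac{1}{4}\int_{E(1)} u(x+r\tilde y,t+r^2\tilde s)\,\frac{|\tilde y|^2}{\tilde s^2}\,d\tilde y\,d\tilde s,
\]
from which continuity of $\phi$ on $[0,R]$ is automatic (the normalisation $V(1)=4$ together with continuity of $u$ forces $\phi(r)\to u(x,t)=\phi(0)$ as $r\to 0^+$).

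Differentiating under the integral on this fixed domain gives, by the chain rule, $\phi'(r)=\frac{1}{4}\int_{E(1)}\bigl[\nabla_x u\cdot\tilde y+2r\tilde s\,\partial_t u\bigr]\tfrac{|\tilde y|^2}{\tilde s^2}\,d\tilde y\,d\tilde s$ where the derivatives of $u$ are evaluated at $(x+r\tilde y,t+r^2\tilde s)$. Rescaling back to the original coordinates, this becomes
\[
\phi'(r)=\frac{1}{4r^{n+1}}\int_{E(x,t;r)}\!\!\bigl[\nabla_y u\cdot(y-x)+2(s-t)\,\partial_s u\bigr]\frac{|y-x|^2}{(t-s)^2}\,dy\,ds.
\]
The key computation is now $\nabla_y\psi=-\tfrac{y-x}{2(t-s)}$ and $\partial_s\psi=\tfrac{n}{2(t-s)}-\tfrac{|y-x|^2}{4(t-s)^2}$, which allows all geometric factors above to be written in terms of $\psi$; in particular $|y-x|^2/(t-s)^2=4|\nabla_y\psi|^2$.

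Substituting these expressions, one can integrate by parts in $y$ on the spatial slices of $E(x,t;r)$ to move one gradient from $u$ onto $\psi$: since $\psi$ vanishes on $\partial E(x,t;r)\setminus\{(x,t)\}$, all boundary terms on the lateral surface are killed, and the calculation produces a $\Delta_y u$ term weighted against $\psi$ together with error terms. The time-derivative contribution is then handled by integration by parts in $s$ on the vertical slices, again using that $\psi$ vanishes on the boundary of the heatball; the cross terms produced by the two integrations by parts are designed to combine, and the coefficients should conspire (via the identity $\Delta_y\psi-\partial_s\psi=-\tfrac{n}{2(t-s)}$ which is just $\Phi_n$ solving the backwards heat equation) so that the $\Delta_y u$ and $-\partial_s u$ terms package into $Hu(y,s)\cdot\psi(y,s)$, with leftover constants giving exactly the factor $n/r^{n+1}$.

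The main obstacle is the singularity at the apex $(y,s)=(x,t)$, where $\psi$ blows up and $\Phi_n$ fails to be integrable in the standard sense against the kernels arising in the integration by parts. To handle this, I would carry out the integration by parts on the collar $E(x,t;r)\setminus E(x,t;\varepsilon)$, track the contributions of $\partial E(x,t;\varepsilon)$, and verify that they tend to zero as $\varepsilon\to 0^+$; the point is that the relevant densities behave like bounded powers of $\varepsilon$ by the scaling computation at the start of Section \ref{heatstart}. This is the only genuinely delicate bookkeeping in the argument — everything else is chain rule, divergence theorem, and the happy cancellations afforded by $\psi$ vanishing on the lateral boundary.
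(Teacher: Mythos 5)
Your proposal follows essentially the same route as the paper's proof: parabolic rescaling to the fixed domain $E(1)$ to get continuity and to differentiate under the integral, then rescaling back and integrating by parts in $y$ and $s$ using exactly the facts that $\psi=\log(r^n\Phi_n)$ vanishes on $\partial E(x,t;r)$, that $\nabla_y\psi$ reproduces the kernel, and that $\Phi_n$ solves the (backward) heat equation so the terms assemble into $Hu\cdot\psi$. The only differences are cosmetic (your sign convention in the parametrisation should be $(y,s)=(x-r\tilde y,\,t-r^2\tilde s)$ to land in the paper's $E(1)$, which lives in positive time) and that you are more careful than the paper about the apex singularity, where the paper simply differentiates and integrates by parts without an explicit $\varepsilon$-collar.
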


\begin{proof}
Using the translation and rescaling $y=x-r\tilde{y},s=t-r^2\tilde{s}$, we have
$$\phi(r)=\frac{1}{4}\int_{E(1)}u(x-r\tilde{y},t-r^2\tilde{s})\frac{|\tilde{y}|^2}{\tilde{s}^2}\,d\tilde{y}\,d\tilde{s}.$$
Continuity of $\phi$ is obvious from the smoothness of $u$ in a neighbourhood of $E(x,t;r)$, and noting that the normalisation $V(1)=4$ in these averages is the correct one. We can differentiate under the integral to obtain
$$\phi'(r)=-\frac{1}{4}\int_{E(1)}\left(2rsu_t+\sum_{i=1}^nu_{x_i}y_i\right)\frac{|y|^2}{s^2}\,dy\,ds$$
suppressing the argument $(x-ry,t-r^2s)$ of $u_{x_i}$ and $u_t$. Scaling back we have
$$\phi'(r)=-\frac{1}{4r^{n+1}}\int_{E(r)}\left(2su_t+\sum_{i=1}^nu_{x_i}y_i\right)\frac{|y|^2}{s^2}\,dy\,ds$$
with the argument $(x-y,t-s)$ suppressed. For convenience let us denote $\psi(y,s)=\log(r^n\Phi_n(y,s))=n\log(r)-(n/2)\log(4\pi s)-|y|^2/4s$. We have $\psi_{x_i}(y,s)=-y_i/2s$ and thus
$$\int_{E(r)}2su_t\frac{|y|^2}{s^2}\,dy\,ds=-4\int_{E(r)}u_t\sum_{i=1}^n\psi_{x_i}(y,s)y_i\,dy\,ds.$$
Noting that $\psi(y,s)=0$ on the boundary of $E(r)$, we can use integration by parts $\psi_{x_i}$ and $y_iu(x-y,t-s)$ to get that this equals
$$4\int_{E(r)}\psi\sum_{i=1}^n(u_t-y_iu_{tx_i})\,dy\,ds=4\int_{E(r)}\psi\left(nu_t-\sum_{i=1}^ny_iu_{tx_i}\right)\,dy\,ds.$$
Focusing on the second term, we have
\begin{align*}
&-4\int_{E(r)}\sum_{i=1}^n\psi(y,s)y_i\frac{\partial}{\partial s}(-u_{x_i}(x-y,t-s))\,dy\,ds\\
=\,&-4\int_{E(r)}\sum_{i=1}^n\psi_t(y,s)u_{x_i}(x-y,t-s)y_i\,dy\,ds\\
=\,&-4\int_{E(r)}\sum_{i=1}^n\left(\frac{|y|^2}{4s^2}-\frac{n}{2s}\right)u_{x_i}(x-y,t-s)y_i\,dy\,ds
\end{align*}
and hence
$$\int_{E(r)}2su_t\frac{|y|^2}{s^2}\,dy\,ds=4\int_{E(r)}\left[nu_t\psi-\sum_{i=1}^n\left(\frac{|y|^2}{4s^2}-\frac{n}{2s}\right)u_{x_i}y_i\right]dy\,ds.$$
All together this gives
\begin{align*}
\phi'(r)&=\frac{-1}{4r^{n+1}}\int_{E(r)}\left[4nu_t\psi-4\sum_{i=1}^n\left(\frac{|y|^2}{4s^2}-\frac{n}{2s}\right)u_{x_i}y_i+\sum_{i=1}^nu_{x_i}y_i\frac{|y|^2}{s^2}\right]dy\,ds\\
&=\frac{-1}{4r^{n+1}}\int_{E(r)}\left[4nu_t\psi-4\sum_{i=1}^n\frac{-n}{2s}u_{x_i}y_i\right]dy\,ds\\
&=\frac{n}{r^{n+1}}\int_{E(r)}\left[\left(\sum_{i=1}^n\frac{-y_i}{2s}u_{x_i}(x-y,t-s)\right)-u_t(x-y,t-s)\psi\right]dy\,ds.
\end{align*}
Since $\psi_{x_i}(y,s)=-y_i/2s$ and $\psi$ is $0$ on the boundary of $E(r)$, an integration by parts in $y_i$ for each term of the sum yields
\begin{align*}
\phi'(r)&=\frac{n}{r^{n+1}}\int_{E(r)}\Delta u(x-y,t-s)-u_t(x-y,t-s)\psi\,dy\,ds\\
&=\frac{n}{r^{n+1}}\int_{E(r)}Hu(x-y,t-s)\psi(y,s)\,dy\,ds\\
&=\frac{n}{r^{n+1}}\int_{E(x,t;r)}Hu(y,s)\log(r^n\Phi(x-y,t-s))\,dy\,ds
\end{align*}
as desired.
\end{proof}

As before, we have the following immediate corollary

\vspace{0.3cm}

\begin{cor}\label{HeatCor}
Whenever $u:\Omega\rightarrow\mathbb{R}$ satisfies $Hu\geq 0$ in $\Omega$ and the heatball $E(x,t;R)\subseteq\Omega$, we have
$$u(x,t)\leq\frac{1}{4r^n}\int_{E(x,t;R)}u(y,s)\frac{|x-y|^2}{(t-s)^2}\,dy\,ds$$
and the same inequality is also true for $u$ replaced with $u_+(x)=\max(u(x),0)$.
\end{cor}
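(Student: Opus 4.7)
The plan is to mirror the proof of Corollary \ref{LapCor}, using the derivative formula (\ref{deriv2}) in place of (\ref{deriv1}). The crucial observation is that the non-negativity of $\phi'$ is now built directly into the definition of the heatball: on $E(x,t;r)$ we have by definition $\Phi_n(x-y,t-s)\geq 1/r^n$, hence $r^n\Phi_n(x-y,t-s)\geq 1$, so the logarithm factor appearing in (\ref{deriv2}) is non-negative throughout the domain of integration. Combined with the hypothesis $Hu\geq 0$, this immediately gives $\phi'(r)\geq 0$ on $(0,R)$. Since $\phi$ is continuous on $[0,R]$ with $\phi(0)=u(x,t)$, the fundamental theorem of calculus yields
\[ u(x,t)=\phi(0)\leq \phi(R)=\frac{1}{4R^n}\int_{E(x,t;R)}u(y,s)\frac{|x-y|^2}{(t-s)^2}\,dy\,ds, \]
which is the desired inequality.

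For the $u_+$ version, the argument from the remark following Corollary \ref{LapCor} carries over verbatim, since the kernel $|x-y|^2/(t-s)^2$ is non-negative. Explicitly, if $u(x,t)\leq 0$ then $u_+(x,t)=0$ is bounded above by the (non-negative) average of $u_+$; if $u(x,t)>0$ then $u_+(x,t)=u(x,t)$ is bounded above by the average of $u$, which in turn is bounded above by the average of $u_+$ because $u\leq u_+$ pointwise and the kernel is non-negative. I do not anticipate any obstacle here: the only non-trivial ingredient is the derivative formula (\ref{deriv2}) which has already been established, and the sign of the log factor is a tautology from the definition of $E(x,t;r)$.
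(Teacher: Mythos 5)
Your proposal is correct and follows exactly the route the paper intends: the paper treats this as an ``immediate corollary'' of the derivative formula (\ref{deriv2}), with the positivity of $\log(r^n\Phi_n(x-y,t-s))$ on the heatball (which the paper itself notes in the proof of Claim \ref{HeatClaim}) giving $\phi'\geq 0$, and the $u_+$ statement following by the same remark as after Corollary \ref{LapCor}. No gaps.
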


\vspace{0.3cm}

\begin{claim}\label{HeatClaim}
Let $u:\Omega\rightarrow\mathbb{R}$ satisfy $Hu\geq 1$ in $\Omega$ and let $\Omega_R$ denote the set of points $(x,t)$ in $\Omega$ so that $E(x,t;R)\subseteq\Omega$. Then if $u\leq c$ on $\Omega$, we have that $u\leq c-K_nR^2$ on $\Omega_R$, where
$$K_n=(n^2/2)|E(0,0;1)|e^{n+2}$$
\end{claim}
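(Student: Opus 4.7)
The plan is to mimic the structure of Claim~\ref{LapClaim}, replacing the ball-average derivative formula by the heatball formula~(\ref{deriv2}). Given $(x,t) \in \Omega_R$, so that $E(x,t;R) \subseteq \Omega$, I would write
$$u(x,t) \;=\; \phi(0) \;=\; \phi(R) \;-\; \int_0^R \phi'(r)\,dr$$
and argue separately that $\phi(R) \le c$ and that the integral is bounded below by $K_n R^2$.

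For the first estimate, I would invoke the normalisation $V(r) = 4r^n$ computed in Section~\ref{heatstart}: this is precisely the factor built into the definition of $\phi$, so the weight $|x-y|^2/(4r^n(t-s)^2)$ is a probability density on $E(x,t;r)$. Hence $\phi(r)$ is an honest weighted average of $u$, and $u \le c$ on $\Omega \supseteq E(x,t;R)$ forces $\phi(R) \le c$.

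For the second estimate I would apply (\ref{deriv2}). By the very definition of the heatball, $\Phi_n(x-y,t-s) \ge 1/r^n$ on $E(x,t;r)$, so $\log(r^n \Phi_n(x-y,t-s)) \ge 0$ throughout the domain of integration. The hypothesis $Hu \ge 1$ therefore lets me drop $Hu$ and reduce matters to lower-bounding $\int_{E(x,t;r)} \log(r^n \Phi_n)\,dy\,ds$. To convert the logarithm into a usable number, I would restrict to a nested sub-heatball $E(x,t;\rho) \subseteq E(x,t;r)$ with $\rho < r$: on this smaller set one has $\Phi_n(x-y,t-s) \ge 1/\rho^n$, hence $\log(r^n \Phi_n) \ge n\log(r/\rho)$, while by parabolic scaling $|E(x,t;\rho)| = \rho^{n+2}|E(0,0;1)|$. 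Taking the clean choice $\rho = r/e$ gives
$$\phi'(r) \;\ge\; \frac{n}{r^{n+1}} \cdot n \cdot \frac{r^{n+2}}{e^{n+2}}|E(0,0;1)| \;=\; \frac{n^2}{e^{n+2}}|E(0,0;1)|\,r,$$
so integration from $0$ to $R$ yields a lower bound of $K_n R^2$ with $K_n$ of precisely the shape stated in the claim.

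The only non-routine step is deciding how to extract a quantitative lower bound from the logarithmic weight: the reduction to a nested heatball of some radius $\rho = r/c$ for a fixed constant $c > 1$ converts the problem into optimising $(\rho/r)^{n+2}\log(r/\rho)$, and the specific value $\rho = r/e$ produces the tidiest constant. Otherwise the argument is a direct parabolic analogue of Claim~\ref{LapClaim}, requiring no new structural input beyond (\ref{deriv2}) and the scaling $|E(r)| = r^{n+2}|E(1)|$.
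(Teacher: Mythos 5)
Your argument is correct and essentially identical to the paper's proof: both use the decomposition $u(x,t)=\phi(R)-\int_0^R\phi'(r)\,dr$, bound $\phi(R)\leq c$ via the normalisation of the heatball weight, drop the non-negative logarithmic weight outside the nested heatball $E(x,t;r/e)$ on which it is at least $n$, and invoke the parabolic scaling $|E(x,t;r/e)|=(r/e)^{n+2}|E(0,0;1)|$. Note that your computation yields $K_n=(n^2/2)|E(0,0;1)|e^{-(n+2)}$, which is also what the paper's own proof produces; the factor $e^{n+2}$ in the statement of the claim appears to be a typographical slip rather than a discrepancy in your argument.
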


\begin{proof}
By the fundamental theorem of calculus for $\phi$ and the derivative formula (\ref{deriv2}), we have
\begin{align*}
u(x,t)&=\phi(0)=\phi(R)-\int_0^R\phi'(r)\,dr\\
&=\frac{1}{4r^n}\int_{E(x,t;r)}u(y,s)\frac{|x-y|^2}{(t-s)^2}\,dy\,ds\\
&-\int_0^R\frac{n}{r^{n+1}}\int_{E(x,t;r)}Hu(y,s)\log(r^n\Phi_n(x-y,t-s))\,dy\,ds\,dr.
\end{align*}
Note that $\log(r^n\Phi_n(x-y,t-s))$ is positive on the heatball $E(x,t;r)$, and is at least $n$ on the heatball $E(x,t;r/e)$. Hence we have the bound
$$\int_{E(x,t;r)}Hu(y,s)\log(r^n\Phi_n(x-y,t-s))\,dy\,ds\geq n|E(x,t;r/e)|.$$
The right hand side is equal to $n|E(x,t;1)|(r/e)^{n+2}=n|E(0,0;1)|(r/e)^{n+2}=:C_nr^{n+2}$. We use this bound together with the bound $\phi(R)\leq c$ (which follows from $u\leq c$ in $\Omega$) to obtain that for $(x,t)\in\Omega_R$,
$$u(x,t)\leq c-\int_0^R C_nnr\,dr=c-K_nR^2.$$
\end{proof}

We stress again that Corollary \ref{HeatCor} and Claim \ref{HeatClaim} are not sufficient as is, which is why we must introduce one more ingredient - averages over the so-called ``modified heatballs". This idea was used by Kuptsov \cite{kuptsov1981mean}, our treatment follows a paper by Watson \cite{watson2002elementary} giving a review of the main ideas and some further results. We shall only require some basic facts, which we summarise here.

Fix $m\geq 3$. Starting with a function $u$ on an open subset $\Omega$ of $\mathbb{R}^{n+1}$, we examine the above averages $\phi$ for a function $\tilde{u}$ on $\mathbb{R}^m\times\Omega$, defined by extending $u$ independent of the first $m$ variables - that is, for $\xi\in\mathbb{R}^m$, $(x,t)\in\Omega$, set $\tilde{u}(\xi,x,t)=u(x,t)$ and consider the averages $\phi$ for $\tilde{u}$, which clearly take the form
$$\phi(r)=\frac{1}{4r^{m+n}}\int_{E(\xi,x,t;r)}\frac{|x-y|^2+|\xi-\eta|^2}{(t-s)^2}u(y,s)\,d\eta\,dy\,ds.$$
Since $u$ does not depend on $\eta$, we can carry out the integration in $\eta$. As above, we rearrange the superlevel set formula defining the heatball to observe that $E(\xi,x,t;r)$ is the set of $(\eta,y,s)$ satisfying
$$0\leq t-s\leq r^2/4\pi,|x-y|^2+|\xi-\eta|^2\leq 2(m+n)(t-s)\log(r^2/4\pi(t-s)).$$
The $(n,m)$-modified heatball is the projection of $E(\xi,x,t;r)$ onto the last $n+1$ coordinates, denoted $E_m(x,t;r)$, and by carrying out the integration in $\eta$ we obtain a new kernel on $E_m(x,t;r)$ that will be bounded for large enough $m$, as we shall demonstrate. Explicitly, we may write 
\begin{align*}
E_m(x,t;r)&=\{(y,s):\Phi_{m+n}(0,x-y,t-s)\geq 1/r^{m+n}\}\\
&=\{(x-ry,t-r^2s):\Phi_{m+n}(0,y,s)\geq 1\}.
\end{align*}
For fixed $y$, $s$, we must integrate over $|\xi-\eta|\leq A=A(x-y,t-s)$, given by
$$A(x-y,t-s):=\left(2(t-s)(m+n)\log\left(\frac{r^2}{4\pi(t-s)}\right)-|x-y|^2\right)^{1/2}.$$
We compute the integral in polar coordinates
\begin{align*}
\int_{|\xi-\eta|\leq A}\frac{|\xi-\eta|^2+|x-y|^2}{(t-s)^2}\,d\eta&=|\partial B_1|\int_0^A\frac{r^2+|x-y|^2}{(t-s)^2}r^{m-1}\,dr\\
&=\frac{|\partial B_1|}{(t-s)^2}\left(\frac{A^{m+2}}{m+2}+|x-y|^2\frac{A^m}{m}\right)\\
&=\frac{|B_1|}{(t-s)^2}A^m\left(\frac{mA^2}{m+2}+|x-y|^2\right)
\end{align*}
where we used the basic formula $|\partial B_1|=m|B_1|$. Hence we may write
\begin{align*}
\phi(r)&=\frac{1}{4r^{m+n}}\int_{E_m(x,t;r)}\frac{|B_1|}{(t-s)^2}A^m\left(\frac{mA^2}{m+2}+|x-y|^2\right)u(y,s)\,dy\,ds\\
&=\frac{1}{r^{n+2}}\int_{E_m(x,t;r)}\frac{|B_1|r^{2-m}}{4(t-s)^2}A^m\left(\frac{mA^2}{m+2}+|x-y|^2\right)u(y,s)\,dy\,ds.
\end{align*}
We will see that the non-negative quantity
$$\frac{|B_1|r^{2-m}}{4(t-s)^2}A^m\left(\frac{mA^2}{m+2}+|x-y|^2\right)$$
is bounded above on $E_m(x,t;r)$ by some constant independent of $x$, $t$ and $r$. Indeed, noting that the points of $E_m(x,t;r)$ are of the form $(x-ry,t-r^2s)$ for $y$ and $s$ satisfying $\Phi_{m+n}(0,y,s)\geq 1$, we may rewrite the expression as
$$\kappa_{m,n}(y,s):=\frac{|B_1|}{4s^2}[\tilde{A}(y,s)]^m\left(\frac{m[\tilde{A}(y,s)]^2}{m+2}+|y|^2\right)$$
where $\Phi_{m+n}(0,y,s)\geq 1$ and
$$\tilde{A}(y,s)=\left(2s(m+n)\log\left(\frac{1}{4\pi s}\right)-|y|^2\right)^{1/2}.$$
Substituting the value of $\tilde{A}^2$ into the brackets gives
$$\kappa_{m,n}(y,s)=\frac{|B_1|}{2m+4}\tilde{A}^m\left(\frac{m(m+n)}{s}\log\left(\frac{1}{4\pi s}\right)+\frac{|y|^2}{s^2}\right).$$
It is clear that this is continuous in $(y,s)$ on the set where $\Phi_{m+n}(0,y,s)\geq 1$, so once we show the limit exists as $(y,s)\rightarrow(0,0)$ in the superlevel set, we will have that $\kappa_{m,n}$ extends continuously to the reflection of the modified heatball $E_m(0,0;1)$, that is, $\{(0,0)\}\cup\{(y,s):\Phi_{m+n}(0,y,s)\geq 1\}$, a compact set, and is therefore bounded.

Observe that $\tilde{A}^m$ is bounded by $(2s(m+n)\log(1/4\pi s))^{m/2}$, and also that in (the reflection of) the modified heatball we have $|y|^2\leq 2s(m+n)\log(1/4\pi s)$, so $\kappa_{m,n}$ can be bounded by a constant depending on $n$ and $m$ times
$$s^{(m-2)/2}\log\left(\frac{1}{4\pi s}\right)^{(m+2)/2}.$$
Similarly to before, we can easily see that as $s$ goes to $0$, provided $m\geq 3$, this quantity goes to $0$, hence $\kappa_{m,n}$ extends continuously to $(0,0)$ and is thus bounded above.

We note that as we are to maximise $\kappa_{m,n}$ over the set
\begin{align*}
E:=&\{(0,0)\}\cup\{(y,s):\Phi_{m+n}(0,y,s)\geq 1\}\\
=&\{(y,s)\in\mathbb{R}^{n+1}:0\leq s\leq 1/4\pi,|y|^2\leq 2s(m+n)\log(1/4\pi s)\},
\end{align*}
and from the latter expression it is apparent that $\kappa_{m,n}$ is equal to $0$ on the boundary of $E$, and differentiable in the interior, we see that the maximum is attained in the interior of $E$, thus may be found by basic calculus. The determination of this maximum, which we shall denote by $M_{m,n}$, is entirely routine and so has been placed into Appendix \ref{maxcalc}. We have
$$M_{m,n}=|B_1|\frac{2\pi}{e}\left(\frac{2(m+n)(m+2)}{(4\pi e)(m-2)}\right)^{m/2}\left(\frac{m(m+n)}{m-2}\right).$$

\vspace{0.3cm}

\begin{prop}\label{HeatMod}
Let $m\geq 3$ and let $M_{m,n}$ be as above. Suppose $u:\Omega\rightarrow\mathbb{R}$ satisfies $Hu\geq 0$ on $\Omega$ and let $u_+$ be the positive part of $u$, that is, $u_+(y,s)=\max(u(y,s),0)$. Then whenever the modified heatball $E_m(x,t;R)$ is contained in $\Omega$, we have
$$u_+(x,t)\leq\frac{M_{m,n}}{R^{n+2}}\int_{E_m(x,t;R)}u_+(y,s)\,dy\,ds$$
\end{prop}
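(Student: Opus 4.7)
The plan is to apply the subtemperature mean value inequality (Corollary \ref{HeatCor}) to an extension of $u$ living in $(m+n+1)$ dimensions, and then recycle the computation already performed in the excerpt (integration of the weight in $\eta$) to turn the resulting $(m+n+1)$-dimensional heatball average into an $(n+1)$-dimensional modified heatball average, whose kernel has been bounded above by $M_{m,n}$.

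First I would define $\tilde{u}(\xi, x, t) := u(x, t)$ on $\mathbb{R}^m \times \Omega$ and observe that, since $\tilde{u}$ is constant in $\xi$, all $\xi$-derivatives vanish and therefore $H\tilde{u}(\xi, x, t) = (Hu)(x, t) \geq 0$ on the whole extended domain. By the very definition of the $(n,m)$-modified heatball $E_m(x,t;R)$ as the projection onto the last $n+1$ coordinates of the standard heatball $E(\xi, x, t; R)$ in $\mathbb{R}^{m+n+1}$, the hypothesis $E_m(x, t; R) \subseteq \Omega$ is equivalent to $E(0, x, t; R) \subseteq \mathbb{R}^m \times \Omega$. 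Corollary \ref{HeatCor} applied to $\tilde{u}_+$ centred at $(0, x, t)$ therefore yields
$$u_+(x, t) \;=\; \tilde{u}_+(0, x, t) \;\leq\; \frac{1}{4R^{m+n}} \int_{E(0, x, t; R)} \tilde{u}_+(\eta, y, s) \, \frac{|x-y|^2 + |\eta|^2}{(t-s)^2} \, d\eta \, dy \, ds.$$

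Next, I would carry out the integration in $\eta$ exactly as in the passage preceding the statement: since $\tilde{u}_+(\eta, y, s) = u_+(y, s)$ is independent of $\eta$, the polar-coordinate computation that produced the kernel $\kappa_{m,n}$ for general $u$ applies verbatim with $u$ replaced by $u_+$, giving
$$u_+(x, t) \;\leq\; \frac{1}{R^{n+2}} \int_{E_m(x, t; R)} \kappa_{m,n}\!\left(\tfrac{x-y}{R}, \tfrac{t-s}{R^2}\right) u_+(y, s) \, dy \, ds.$$
Since the rescaled argument $(\tfrac{x-y}{R}, \tfrac{t-s}{R^2})$ ranges precisely over (the reflection of) $E_m(0, 0; 1)$, where $\kappa_{m,n}$ is bounded above by $M_{m,n}$, and since $u_+ \geq 0$, we may replace the kernel by this constant to obtain the desired inequality.

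The only real subtlety is the bookkeeping: checking that the containment $E(0, x, t; R) \subseteq \mathbb{R}^m \times \Omega$ is exactly the projection condition $E_m(x, t; R) \subseteq \Omega$, and that the kernel that materialises after the $\eta$-integration coincides with the function $\kappa_{m,n}$ whose maximum was computed. Both of these are essentially bookkeeping checks already made in the excerpt, so no new computational difficulty arises.
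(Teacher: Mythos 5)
Your proposal is correct and follows essentially the same route as the paper: extend $u$ trivially in $m$ extra spatial variables, apply Corollary \ref{HeatCor} to the extension over the full $(m+n+1)$-dimensional heatball, integrate out the extra variables to obtain the bounded kernel $\kappa_{m,n}$ on the modified heatball, and replace it by its maximum $M_{m,n}$ using the non-negativity of $u_+$. The bookkeeping points you flag (the equivalence of the containment conditions and the identification of the post-integration kernel with $\kappa_{m,n}$) are exactly the checks the paper relies on from the preceding discussion.
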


We remark that it is clear that the modified heatball is smaller when $m$ is smaller, thus in this proposition, the conclusion is strongest when $m$ and $M_{m,n}$ are as small as possible. We shall make no attempt to optimise this for any particular $n$.

\begin{proof}
It follows from Corollary \ref{HeatCor} that for $\xi\in\mathbb{R}^m$,
$$u_+(x)\leq\frac{1}{4R^{m+n}}\int_{E(\xi,x,t;R)}u_+(y,s)\frac{|x-y|^2+|\xi-\eta|^2}{(t-s)^2}\,d\eta\,dy\,ds.$$
We have just seen that the right hand side can be rewritten as
$$\frac{1}{R^{n+2}}\int_{E_m(x,t;R)}\frac{|B_1|r^{2-m}}{4(t-s)^2}A^m\left(\frac{mA^2}{m+2}+|x-y|^2\right)u_+(y,s)\,dy\,ds$$
which can be bounded above by
$$\frac{M_{m,n}}{R^{n+2}}\int_{E_m(x,t;R)}u_+(y,s)\,dy\,ds$$
since $u_+$ is non-negative.
\end{proof}

To conclude the proofs of Theorems \ref{LapThm} and \ref{HeatThm}, it remains to give some discussion of mean value inequalities.

\subsection{Mean value inequalities}\label{mvi}
We shall generalise a theorem of Pavlovi\'c \cite{pavlovic1994subharmonic}, which for non-negative functions $f$ will allow us to pass from mean value inequalities for $f$ to mean value inequalities for $f^p$, $0<p<1$. The formulation below is not the most general, but sufficient for our purposes, and requires only minor adjustments to Pavlovi\'c's proof.

The result was originally proven by Pavlovi\'c for averages over Euclidean balls, but we will see that the result remains true for averages associated to a system of objects with associated centres and radii. We obtain this system by fixing some set and group of dilations, and constructing this system by dilating then translating the initial set.

Remarks:
\begin{itemize}
\item One can also easily establish that for non-negative functions $f$ satisfying mean value inequalities, a mean value inequality holds for $f^p$, $1<p<\infty$, by using Jensen's inequality.
\item There are many proofs of this result for Euclidean balls, we have chosen a simple proof that leads easily to some helpful generalisations.
\item There are extensions of this result on manifolds that seem suitable for generalisation, see Li \& Schoen \cite{li1984p}.
\item This proof cannot effectively deal with unbounded weights such as those naturally arising for heatballs, hence the modified heatballs considered in the previous subsection.
\end{itemize}

\textbf{Set-up.} Fix an open, bounded set $B\subseteq\mathbb{R}^n$ containing $0$. We will call this the unit ball at $0$. Consider a group of dilations $D=\{\delta_r:r>0\}$ given by $\delta_r(x_1,\dots,x_n)=(r^{\lambda_1}x_1,\dots,r^{\lambda_n}x_n)$ for some positive numbers $\lambda_1,\dots,\lambda_n$. Define the degree of homogeneity $A=\lambda_1+\dots+\lambda_n$. Denote by $B_r$ the image of $B$ under $\delta_r$, and call this the ball of radius $r$ at $0$. Translation of $B_r$ by a fixed element $a\in\mathbb{R}^n$ will be denoted $B_r(a)$, and we will call this the ball of radius $r$ at $a$. We will call the collection of balls arising in this way a system of balls associated to the pair $(B,D)$.

Note that
$$\delta_s(B_r(a))=\delta_s(a+B_r)=\delta_s(a+\delta_r(B))=\delta_s(a)+\delta_s\delta_r(B)=B_{rs}(\delta_s(a)).$$

\textbf{Initial assumption.} We will ask that on $B$ there exists a ``radius function" with good properties. Namely, we ask for the existence of a bounded, positive measurable function $R$ on $B$ so that $\overline{B_{R(a)}(a)}\subseteq B$, and moreover, whenever $x\in \overline{B_{R(a)}(a)}$, we have $R(a)/R(x)$ bounded above by some constant $K$ independent of $a$ and $x$. In fact, we will show that a radius function always exists in this set-up.

Observing that it is sufficient to suppose the existence of radius function with this property is the key idea that generalises Pavlovi\'c's proof. For Euclidean balls, they used $R(a)=(1-|a|)/2$ for $a\in B$ in place of what we call a radius function. This fits into the framework considered here in the obvious way - the dilation structure is just the usual scaling by $r$, and clearly $R(a)/R(x)$ is bounded above by $2$ on $B_{R(a)}(a)$, since $|x|\leq (1+|a|)/2$ for $x\in B_{R(a)}(a)$, and so $1-|x|\geq(1-|a|)/2$, which rearranges to give the bound.

\textbf{Notation.} For a given $C>0$, consider the set $S(B,D,C)$ of functions satisfying mean value inequalities with constant $C$ for the system of balls associated to $(B,D)$. That is, $S(B,D,C)$ is the set of those locally bounded non-negative functions $f$ defined on open subsets $\Omega$ of $\mathbb{R}^n$ with
$$f(a)\leq \frac{C}{r^A}\int_{B_r(a)}f(x)\,dx$$
whenever $\overline{B_r(a)}\subseteq \Omega$. Note that we do not assume the functions in $S(B,D,C)$ share a common domain; the set $\Omega$ is not fixed.

\vspace{0.3cm}

\begin{thm}\label{pmean}
In the setting described above, there exists a constant $\tilde{C}_p$ such that $f\in S(B,D,C)\Rightarrow f^p\in S(B,D,\tilde{C}_p)$ where $0<p<1$. That is, for a choice of ``ball" and a dilation structure on $\mathbb{R}^n$, provided a suitable radius function exists, we have that whenever $f:\Omega\rightarrow\mathbb{R}$ is a locally bounded non-negative function on an open set $\Omega$ having
$$f(a)\leq \frac{C}{r^A}\int_{B_r(a)}f(x)\,dx$$
whenever $\overline{B_r(a)}\subseteq\Omega$, then we also have
$$f(a)^p\leq \frac{\tilde{C}_p}{r^A}\int_{B_r(a)}f(x)^p\,dx$$
whenever $\overline{B_r(a)}\subseteq\Omega$.

Moreover, we can take $\tilde{C}_p$ to be $2R(0)^{-A}(2K^A)^{(1-p)/p}C$.
\end{thm}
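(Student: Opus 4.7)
The proof has three components: a reduction to proving the inequality on the unit ball $B$, an existence proof for the radius function on $B$, and a Pavlovi\'c-style maximum principle argument.

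\emph{Reduction to the unit ball.} Given $\overline{B_r(a)}\subseteq\Omega$, define $g(x):=f(a+\delta_r(x))$ on the open set $\tilde\Omega:=\delta_{1/r}(\Omega-a)\supseteq\overline{B}$. Using the scaling identity $\delta_s(B_t(b))=B_{st}(\delta_s(b))$ from the set-up and the Jacobian $r^A$ of $\delta_r$, a direct substitution verifies that $g\in S(B,D,C)$ on $\tilde\Omega$ and that $\int_B g^p\,dx=r^{-A}\int_{B_r(a)}f^p\,dy$. It therefore suffices to prove $g(0)^p\leq\tilde{C}_p\int_B g^p$; moreover, since $g$ is locally bounded and $\overline{B}$ is compact in $\tilde\Omega$, $g$ is bounded on $\overline{B}$.

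\emph{The radius function.} Set $R(a):=\tfrac{1}{2}\sup\{s>0:\overline{B_s(a)}\subseteq B\}$. This is positive (as $B$ is open) and bounded (as $B$ is bounded), and a short semicontinuity check shows it is measurable. The ratio bound $R(a)/R(x)\leq K$ on $\overline{B_{R(a)}(a)}$ reflects the fact that any $x$ inside the safety ball at $a$ has comparable room around itself inside $B$, with $K$ depending only on $B$ and the dilation weights $\lambda_1,\dots,\lambda_n$; a triangle-like inequality in the quasi-distance associated to $D$ does the job.

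\emph{The Pavlovi\'c argument.} Set $M:=\sup_{x\in B}R(x)^{A/p}g(x)<\infty$ and pick $x^*\in B$ with $R(x^*)^{A/p}g(x^*)\geq M/2$. Applying the mean value inequality at $x^*$ with radius $R(x^*)$ (whose closed ball lies in $B$ by construction), splitting $g(y)=g(y)^p\cdot g(y)^{1-p}$, and estimating $g(y)\leq MR(y)^{-A/p}\leq MK^{A/p}R(x^*)^{-A/p}$ for $y\in\overline{B_{R(x^*)}(x^*)}$, the exponents conspire (since $A+A(1-p)/p=A/p$) to give
$$g(x^*)\leq\frac{CM^{1-p}K^{A(1-p)/p}}{R(x^*)^{A/p}}\int_{B_{R(x^*)}(x^*)}g^p\,dy.$$
Multiplying by $R(x^*)^{A/p}$, invoking the choice of $x^*$, and enlarging the integration domain to $B$ yields $M\leq 2CM^{1-p}K^{A(1-p)/p}\int_B g^p$, whence $M^p\leq 2CK^{A(1-p)/p}\int_B g^p$. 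Since $R(0)^Ag(0)^p\leq M^p$, the theorem follows with a constant $\tilde{C}_p$ of the form $c\,R(0)^{-A}K^{A(1-p)/p}C$, matching the stated expression up to the precise absolute factor.

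\textbf{Main obstacle.} The single substantial point is the ratio estimate for the radius function in the anisotropic setting: the naive Euclidean choice $R(x)=\tfrac{1}{2}\mathrm{dist}(x,\partial B)$ is not compatible with the dilation $\delta_r$ when the $\lambda_i$ differ, so one must adapt $R$ to the dilation structure and track how translations and anisotropic scalings interact. The remainder is an essentially forced exponent-matching calculation, driven entirely by the need to absorb the weight $R(x^*)^{A/p}$ symmetrically on both sides of the mean value inequality.
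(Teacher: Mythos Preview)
Your argument is correct and follows essentially the same route as the paper's. Two remarks.

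First, the theorem as stated \emph{assumes} the existence of a radius function $R$ with constant $K$ (this is the ``Initial assumption'' in the set-up preceding the theorem), so your second component is extraneous here; the paper proves existence of $R$ separately in a later proposition. You are right that this is the only substantive point, and indeed your naive choice $R(a)=\tfrac{1}{2}\sup\{s:\overline{B_s(a)}\subseteq B\}$ does not obviously satisfy the ratio bound for anisotropic dilations---the paper's construction replaces $B$ by an enclosing coordinate box precisely to force the nesting $B_r\subseteq B_s$ for $r<s$ and make the ratio bound elementary. But none of this is needed to prove the theorem itself.

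Second, your Pavlovi\'c step is organized slightly differently from the paper's: you bound $g(y)^{1-p}$ directly via the global supremum $M=\sup_{x\in B}R(x)^{A/p}g(x)$, whereas the paper first derives the pointwise bound $f(x)^p\leq 2K^A f(a)^p$ on $B_{R(a)}(a)$ from the near-maximizer $a$ and then raises it to the power $(1-p)/p$. Your route avoids picking up the extra factor $2^{(1-p)/p}$ and in fact yields the cleaner constant $\tilde{C}_p=2R(0)^{-A}K^{A(1-p)/p}C$, slightly better than the paper's $2R(0)^{-A}(2K^A)^{(1-p)/p}C$. Apart from this bookkeeping, the two proofs coincide.
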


\begin{proof}
Observe that for each $a\in\mathbb{R}^n$, $s>0$, and $f\in S(B,D,C)$, the function $\tilde{f}$ given by $\tilde{f}(\cdot)=f(a+\delta_s(\cdot))$ is also in $S(B,D,C)$. This is because the collection of mean value inequalities is invariant under the change of variables $y=a+\delta_s(\tilde{y})$, explicitly we have
\begin{align*}&f(y_0)\leq \frac{C}{r^A}\int_{B_r(y_0)}f(y)\,dy\\
\Leftrightarrow\quad &\tilde{f}(\tilde{y}_0)\leq C\left(\frac{s}{r}\right)^A\int_{B_{r/s}(\tilde{y}_0)}\tilde{f}(\tilde{y})\,d\tilde{y}
\end{align*}
where $\tilde{y}_0=(\delta_{1/s}(y_0-a))$. Hence for a given $r>0$, $a\in\mathbb{R}^n$, the inequality
\begin{equation}\label{ineq4}
f(a)^p\leq \frac{\tilde{C}}{r^A}\int_{B_r(a)}f(x)^p\,dx
\end{equation}
for $\overline{B_r(a)}\subseteq\Omega$, is equivalent by the change of variables $x=a+\delta_r(y)$ to
\begin{equation}\label{ineq5}
\tilde{f}(0)^p\leq \tilde{C}_p\int_B\tilde{f}(y)^p\,dy
\end{equation}
for the function $\tilde{f}\in S(B,D,C)$ given by $\tilde{f}(\cdot)=f(a+\delta_r(\cdot))$. Thus if we prove the inequality (\ref{ineq5}) for each $\tilde{f}\in S(B,D,C)$ defined in a neighbourhood of $\overline{B}$, the inequality (\ref{ineq4}) holds for each $r>0$, $a\in\mathbb{R}^n$.

Relabelling $\tilde{f}$ by $f$, it remains to prove inequality (\ref{ineq5}) for each $f\in S(B,D,C)$ defined in a neighbourhood of $\overline{B}$. It is assumed that $f$ is locally bounded, thus bounded on $B$. Now note that the radius function $R$ is necessarily bounded above by $1$, for if there were $x$ such that $R(x)>1$, then the volume $|B_{R(x)}(x)|=R(x)^A|B|>|B|$ but $B_{R(x)}(x)\subseteq B$, a contradiction. Hence $f(x)^pR(x)^A$ is bounded on $B$. Thus there exists $a\in B$ such that
\begin{equation}\label{rwb}
f(x)^pR(x)^A\leq 2f(a)^pR(a)^A.
\end{equation}
The result is trivial if $f(a)=0$ as then $f$ is identically $0$ on $B$, so suppose $f(a)\neq 0$. Now in particular, for $x\in B_{R(a)}(a)$ we have
\begin{equation}\label{rwb2}
f(x)^p\leq 2f(a)^p\left(\frac{R(a)}{R(x)}\right)^A\leq 2K^Af(a)^p.
\end{equation}
We may raise this to the power $(1-p)/p$ to obtain
\begin{equation}\label{rwb3}
f(x)^{1-p}\leq (2K^A)^{(1-p)/p}f(a)^{1-p}.
\end{equation}
Now apply the mean value inequality on $B_{R(a)}(a)$. We have
$$f(a)\leq \frac{C}{R(a)^A}\int_{B_{R(a)}(a)}f(x)\,dx.$$
Multiplying both sides by $R(a)^A$ and using the bound (\ref{rwb3}) we obtain
$$f(a)R(a)^A\leq (2K^A)^{(1-p)/p}Cf(a)^{1-p}\int_{B_{R(a)}(a)}f(x)^p\,dx.$$
Cancelling $f(a)^{1-p}$ from both sides gives
$$f(a)^pR(a)^A\leq (2K^A)^{(1-p)/p}C\int_{B_{R(a)}(a)}f(x)^p\,dx.$$
We can lower bound the expression on the left by $(1/2)R(0)^Af(0)^p$ by using the bound (\ref{rwb}), and we can bound the integral on the right from above by the integral of $f^p$ over all of $B$. Rearranging gives the desired inequality with $\tilde{C}_p=2R(0)^{-A}(2K^A)^{(1-p)/p}C$.
\end{proof}

In fact, there is a further generalisation that we could make, observed by Riihentaus \cite{riihentaus2001generalized}, that the same proof works for concave surjections satisfying the ``$\Delta_2$-condition". This generalisation was established for averages over Euclidean balls, but fits with our generalisation. We have:

\vspace{0.3cm}

\begin{thm}
Let $(B,D)$ be a system of balls and suppose there exists a radius function $R$ as in the previous theorem. Now let $\phi: \mathbb{R}_{\geq 0}\rightarrow\mathbb{R}_{\geq 0}$ be a concave surjection with inverse satisfying the $\Delta_2$-condition, that is, there is a constant $c_\phi\geq 1$ so that for every $t$ we have $\phi^{-1}(2t)\leq c_\phi\phi^{-1}(t)$. Then there exists $C_\phi$ depending on $C$, $K$, $A$, $R(0)$ and $c_\phi$ so that every $f\in S(B,D,C)$ has $\phi\circ f\in S(B,D,C_\phi)$.
\end{thm}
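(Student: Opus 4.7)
The plan is to mimic the proof of Theorem \ref{pmean}, substituting $\phi \circ f$ for $f^p$ throughout. The translation-and-dilation reduction at the start of that proof applies verbatim, so it suffices to prove
$$\phi(f(0)) \leq C_\phi \int_B \phi(f(y))\,dy$$
for every $f \in S(B,D,C)$ defined in a neighbourhood of $\overline{B}$. Before starting I would record three standing facts about $\phi$: a non-negative concave function on $[0,\infty)$ is non-decreasing, and a non-decreasing surjection onto $[0,\infty)$ must satisfy $\phi(0)=0$, so $t \mapsto \phi(t)/t$ is non-increasing on $(0,\infty)$; the inverse $\phi^{-1}$ is non-decreasing; and iterating $\Delta_2$ gives $\phi^{-1}(\alpha t) \leq c_\phi^{\lceil \log_2 \alpha \rceil}\phi^{-1}(t)$ for every $\alpha \geq 1$.

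The core step is to pick a weighted maximiser $a \in B$ of $\phi(f)R^A$, so that $\phi(f(x))R(x)^A \leq 2\phi(f(a))R(a)^A$ for every $x \in B$; such $a$ exists because $f$ is locally bounded on $B$ and $\phi$ is continuous. For $x \in B_{R(a)}(a) \subseteq B$ the radius function property gives $\phi(f(x)) \leq 2K^A \phi(f(a))$, and applying $\phi^{-1}$ together with the iterated $\Delta_2$ bound yields
$$f(x) \leq C' f(a), \qquad C' := c_\phi^{\lceil \log_2(2K^A) \rceil}.$$
This is the direct analogue of inequality (\ref{rwb2}); here the $\Delta_2$ condition plays the role that the power map $t \mapsto t^p$ played in Pavlovi\'c's proof.

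Next I would replace the step of raising to the power $(1-p)/p$. Because $t/\phi(t)$ is non-decreasing on $(0,\infty)$, for $x$ with $f(x) \leq C'f(a)$ we have
$$\frac{f(x)}{\phi(f(x))} \leq \frac{C'f(a)}{\phi(C'f(a))} \leq \frac{C'f(a)}{\phi(f(a))},$$
the last inequality using that $\phi$ is non-decreasing and $C' \geq 1$. Rearranging gives $f(x) \leq \tfrac{C'f(a)}{\phi(f(a))}\phi(f(x))$ throughout $B_{R(a)}(a)$. Substituting this into the mean value inequality $f(a) \leq CR(a)^{-A} \int_{B_{R(a)}(a)} f$ and cancelling a factor of $f(a)$ (the degenerate case $f(a)=0$ forces $\phi \circ f \equiv 0$ on $B$ by the weighted sup, making the desired inequality trivial) yields
$$\phi(f(a))R(a)^A \leq CC' \int_B \phi(f(x))\,dx.$$
Combined with the weighted sup bound $\phi(f(0))R(0)^A \leq 2\phi(f(a))R(a)^A$, this gives the desired estimate with $C_\phi = 2CC'/R(0)^A$, depending only on $C$, $K$, $A$, $R(0)$, and $c_\phi$ as required.

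The main obstacle is conceptual rather than technical: identifying the correct substitute for the algebraic identity $t = t^{1-p}\cdot t^p$ that drove Pavlovi\'c's cancellation. The substitute turns out to be the concavity inequality $t/\phi(t) \leq s/\phi(s)$ for $t \leq s$, which together with $\Delta_2$ provides the comparison between $f$ and $\phi \circ f$ needed to feed the mean value inequality for $f$ back into a mean value inequality for $\phi \circ f$. Once this is in place the rest is a faithful transcription of the proof of Theorem \ref{pmean}.
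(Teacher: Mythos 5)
Your proposal is correct and follows essentially the same route as the paper's own argument: replace the weighted supremum $f^pR^A$ by $\phi(f)R^A$, convert the resulting bound $\phi(f(x))\leq 2K^A\phi(f(a))$ into $f(x)\leq c_\phi^{m}f(a)$ by iterating the $\Delta_2$-condition, and use the monotonicity of $t/\phi(t)$ (from concavity and $\phi(0)=0$) to feed the mean value inequality for $f$ back into one for $\phi\circ f$. The constant $C_\phi=2CC'/R(0)^A$ you obtain matches the structure of the paper's conclusion, and your handling of the degenerate case $f(a)=0$ is fine.
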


We highlight the modifications of Riihentaus needed for the proof of Theorem \ref{pmean} to generalise. Equation (\ref{rwb}) should be replaced by the existence of an $a\in B$ so that $\phi(f(x))R(x)^A\leq 2\phi(f(a))R(a)^A$. Then equation (\ref{rwb2}) holds with $\phi\circ f$ in place of $f^p$. In particular we have $\phi(f(x))\leq 2^m\phi(f(a))$ in $B_{R(a)}(a)$ for some $m\in\mathbb{N}$ depending on $K$ and $A$. Iterating the $\Delta_2$-condition gives $f(x)\leq c_\phi^mf(a)$ in $B_{R(a)}(a)$.

We note that as $\phi$ is concave and $\phi(0)=0$, the function $t/\phi(t)$ is non-decreasing. Thus
$$\frac{u(y)}{\phi(u(y))}\leq\frac{c_\phi^mu(a)}{\phi(c_\phi^mu(a))}\leq c_\phi^m\frac{u(a)}{\phi(u(a))}$$
as $c_\phi\geq 1$. The rest of the proof is as before - use the mean value inequality on $B_{R(a)}(a)$, multiply and divide by $\phi(f(x))$ in the integral, use the bound just derived and $\phi(f(x))R(x)^A\leq 2\phi(f(a))R(a)^A$ to conclude.

Next, we shall establish the earlier claim that radius functions exist in the setting described above. We have:

\vspace{0.3cm}

\begin{prop}\label{radfun}
Let $B$ be an open, bounded subset of $\mathbb{R}^n$ containing $0$, $D$ a group of dilations $\delta_r$ given by $\delta_r(x_1,\dots,x_n)=(r^{\lambda_1}x_1,\dots,r^{\lambda_n}x_n)$ for some positive numbers $\lambda_1,\dots,\lambda_n$, and $B_r(a)$ be the associated balls. Then there exists a positive lower semi-continuous function $R$ on $B$ such that for all $a\in B$, $B_{R(a)}(a)\subseteq B$, and $R(a)/R(x)\leq 4$ whenever $x\in B_{R(a)}(a)$. In particular, $R$ is measurable.
\end{prop}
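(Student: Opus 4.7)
The plan is to build $R$ out of the Euclidean distance to $\partial B$, using Euclidean balls to control the anisotropic balls $B_r(a)=a+\delta_r(B)$. Set $d_E(a):=\mathrm{dist}_{\text{eucl.}}(a,\partial B)$, which is continuous and strictly positive on the open set $B$, and $M:=\sup_{b\in B}|b|<\infty$. The key pointwise estimate is that for any $y=a+\delta_r(b)\in B_r(a)$,
$$|y-a|^2 \;=\; \sum_i r^{2\lambda_i}b_i^2 \;\le\; \bigl(\max_i r^{2\lambda_i}\bigr)|b|^2 \;\le\; M^2\max_i r^{2\lambda_i},$$
and when $r\le 1$ this reads $|y-a|\le Mr^{\lambda_{\min}}$ with $\lambda_{\min}:=\min_i\lambda_i>0$. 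Consequently, whenever $r\le 1$ and $Mr^{\lambda_{\min}}<d_E(a)$, the whole closed ball $\overline{B_r(a)}$ lies inside a Euclidean ball contained in $B$, so $\overline{B_r(a)}\subseteq B$.

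I would then set $R(a):=c\,\min(1,d_E(a))^{1/\lambda_{\min}}$, where $c\in(0,1]$ is a small constant (depending only on $M$ and $\lambda_{\min}$) chosen so that $Mc^{\lambda_{\min}}\le 1-4^{-\lambda_{\min}}$. Since $R\le c\le 1$, the key estimate gives $MR(a)^{\lambda_{\min}}=Mc^{\lambda_{\min}}\min(1,d_E(a))<d_E(a)$, hence $\overline{B_{R(a)}(a)}\subseteq B$. For the ratio bound, given $x\in B_{R(a)}(a)$ the same estimate yields $|x-a|\le MR(a)^{\lambda_{\min}}\le(1-4^{-\lambda_{\min}})\min(1,d_E(a))$; a short case split on whether $d_E(a)\ge 1$ or $<1$ then shows in either case $\min(1,d_E(x))\ge 4^{-\lambda_{\min}}\min(1,d_E(a))$, and taking $1/\lambda_{\min}$-powers gives $R(x)\ge R(a)/4$. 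Continuity of $R$ (and hence the required lower semi-continuity and measurability) is immediate from continuity of $d_E$.

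The main obstacle is really the lack of a clean ``triangle inequality'' for the anisotropic balls: in the Euclidean case one may simply take $R(a)=\rho(a)/2$ with $\rho(a)$ the largest admissible radius and exploit $B_r(x)\subseteq B_{2r}(a)$ when $x\in B_r(a)$, but for general dilation groups the inclusion $\delta_r(b)+\delta_r(B)\subseteq\delta_{2r}(B)$ need not hold. Routing the argument through the Euclidean distance sidesteps this, at the cost of the slightly worse ratio $4$ (rather than $2$) — but crucially, with no dependence on $u$ or $a$.
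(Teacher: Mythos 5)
Your argument is correct, and it takes a genuinely different route from the paper's. The paper encloses $B$ in an open box $\tilde{B}$ and sets $R(a)$ equal to a quarter of the supremal $r$ with $\tilde{B}_r(a)\subseteq B$; the whole point of passing to a box is that its dilates are nested and satisfy the approximate additivity $\tilde{B}_r+\tilde{B}_s\subseteq\tilde{B}_{2(r+s)}$, which is exactly the substitute for the missing triangle inequality you identify, and the sup-definition then yields only lower semi-continuity. You instead dominate the anisotropic ball by a Euclidean one via $|\delta_r(b)|\leq Mr^{\lambda_{\min}}$ for $r\leq 1$ and route everything through the $1$-Lipschitz function $d_E$. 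Your construction buys a continuous (not merely lower semi-continuous) radius function, and the constant $4$ is not actually forced: replacing $1-4^{-\lambda_{\min}}$ by $1-K^{-\lambda_{\min}}$ gives $R(a)/R(x)\leq K$ for any $K>1$, at the cost of shrinking $c$ and hence $R(0)$, which enters the constant $\tilde{C}_p$ of Theorem \ref{pmean}. Conversely, the paper's construction is adapted to the dilation structure rather than to the worst exponent $\lambda_{\min}$, so it will typically produce a larger $R(0)$, and it improves the ratio to $2$ when all $\lambda_i\geq 1$; since nothing is optimised, either proof serves the paper's purposes. The only points worth spelling out in your write-up are that $\mathrm{dist}(a,\partial B)=\mathrm{dist}(a,B^c)$ for $a$ in the open set $B$, so the closed Euclidean ball of radius strictly less than $d_E(a)$ about $a$ does lie in $B$, and that $M=\sup_{b\in\overline{B}}|b|$, so your key estimate passes to the closure $\overline{B_{r}(a)}$.
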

We remark that in this proof, the radius function we obtain is probably quite bad in many cases, in the sense that the value of $R(0)$ is probably small (and hence $\tilde{C}_p$ is large) unless $B$ is quite nice, for example, if $B$ is convex. This is because we essentially construct a radius function adapted to boxes, rather than the geometry of the set, which we use to regain the property that $r<s \Rightarrow B_r\subseteq B_s$, which can fail in general. Also, if all $\lambda_i\geq 1$, we can replace the definition of $R$ in the proof with its double, and obtain further that $R(a)/R(x)\leq 2$ for $x\in B_{R(a)}(a)$ by the obvious modifications.
\begin{proof}
Pick a box $\tilde{B}=(-y_1,y_1)\times\dots\times (-y_n,y_n)$ containing $B$, and use $\tilde{B}_r(a)$ to denote the system of balls associated to the pair $(\tilde{B},D)$. For $a\in B$, let $R(a):=\sup\{r>0: \tilde{B}(a)\subseteq B\}/4$. We will see that this $R$ has the desired properties.

Firstly, note that as $B$ is open and the diameter of $\tilde{B}_r$ tends to $0$ as $r\rightarrow 0$, the set in the definition of $R$ is non-empty so $R$ is well-defined and positive. Furthermore, $\tilde{B}_r\subseteq\tilde{B}_s$ whenever $r<s$, and by definition there is $s>R(a)$ with $\tilde{B}_s\subseteq B$, hence $B_{R(a)}(a)\subseteq \tilde{B}_{R(a)}(a)\subseteq \tilde{B}_s(a)\subseteq B$.

Also, $R$ is lower semi-continuous. We need to check that for any $y<R(a)$, there is a neighbourhood of $a$ so that $R(x)>y$ in that neighbourhood. By the definition of $R$, there is $r>y$ so that $\tilde{B}_{4r}(a)\subseteq B$. Choose $r_0\in (y,r)$. It suffices to find a neighbourhood $U$ of $a$ so that $\tilde{B}_{4r_0}(x)\subseteq B$ for $x$ in that neighbourhood. Take $U=\{x:|x_i-a_i|<[(4r)^{\lambda_i}-(4r_0)^{\lambda_i}]\cdot|y_i|\}$. It immediately follows from the triangle inequality that $\tilde{B}_{4r_0}(x)\subseteq \tilde{B}_{4r}(a)$. 

Finally, let $x\in B_{R(a)}(a)$. We will show $R(a)/R(x)\leq 4$. Note that also $x\in\tilde{B}_{R(a)}(a)$, and as we are considering dilates of an open box, we have in particular that $x\in\tilde{B}_r(a)$ for some $r<R(a)$. We claim that $\tilde{B}_{R(a)}(x)\subseteq B$.

As $x\in\tilde{B}_r(a)=a+\tilde{B}_r$, we have $\tilde{B}_{R(a)}(x)=x+\tilde{B}_{R(a)}\subseteq a+\tilde{B}_r+\tilde{B}_{R(a)}$. We have
$$\tilde{B}_r+\tilde{B}_{R(a)}=\prod_{i=1}^n(-(r^{\lambda_i}+R(a)^{\lambda_i})y_i,(r^{\lambda_i}+R(a)^{\lambda_i})y_i).$$
But $(-(r^{\lambda_i}+R(a)^{\lambda_i})y_i,(r^{\lambda_i}+R(a)^{\lambda_i})y_i)\subseteq (-2(r+R(a))^{\lambda_i}y_i,2(r+R(a))^{\lambda_i}y_i)$ since $z_1^{\lambda_i}+z_2^{\lambda_i}\leq 2(z_1+z_2)^{\lambda_i}$\,\footnote{$z_1^{\lambda_i}+z_2^{\lambda_i}\leq2\max(z_1,z_2)^{\lambda_i}\leq2(z_1+z_2)^{\lambda_i}$, or if $\lambda_i\geq 1$ we get $z_1^{\lambda_i}+z_2^{\lambda_i}\leq(z_1+z_2)^{\lambda_i}$ by convexity.}. Hence $\tilde{B}_r+\tilde{B}_{R(a)}\subseteq \tilde{B}_{2(r+R(a))}$ and thus $\tilde{B}_{R(a)}(x)\subseteq \tilde{B}_{2(r+R(a))}(a)\subseteq B$, the last inclusion by the definition of $R$, that $2(r+R(a))<4R(a)$ and the fact that the dilates of $\tilde{B}$ are nested.

It follows that $R(x)\geq (1/4)R(a)$, which completes the proof.
\end{proof}

Finally, we make some comments about heatballs, and more generally mean value inequalities where the ``centre" is not in the interior of the object. Specifically, let $B$ be some open bounded set not containing $0$, and consider the system of balls generated by dilating then translating this set. Do mean value inequalities for non-negative functions $f$ in this system imply mean value inequalities for $f^p$?

Our proof does not generalise in a straightforward way to this case. However, we can consider another open, bounded set $\tilde{B}$ containing $B\cup\{0\}$, and take that to be our new unit ball. Since $B_r(a)\subseteq \tilde{B}_r(a)$ for all $a$ and $r$, and we are considering non-negative functions in particular, it is clear that integrals over $B_r(a)$ are bounded above by integrals over $\tilde{B}_r(a)$ and hence mean value inequalities associated to $(B,D)$ imply mean value inequalities associated to $(\tilde{B},D)$, that is, $S(B,D,C)\subseteq S(\tilde{B},D,C)$. Theorem \ref{pmean} applies and gives $f\in S(B,D,C)\Rightarrow f^p\in S(\tilde{B},D,\tilde{C}_p)$.

This is sufficient for our purposes, but begs the question of whether we can avoid replacing $B$ by $\tilde{B}$, perhaps at least in the case where $0$ is on the boundary of $B$, as in the heatball case.

\subsection{Completing the proofs}\label{complete}
\begin{proof}[Proof of Theorem \ref{LapThm}]
We first pick $R_1>0$ so that $\Omega_{R_1}$, the set of points $x$ in $\Omega$ such that $\overline{B_{R_1}(x)}\subseteq\Omega$, has positive measure. Note that $\Omega_{R_1}$ is open and bounded. Choose $R_2>0$ such that $(\Omega_{R_1})_{R_2}=\Omega_{R_1+R_2}$ has positive measure. Now let $c>0$ be such that $c-K_nR_2^2<0$, where $K_n=1/(2n+4)$ is as in Claim \ref{LapClaim}. Then by Claim \ref{LapClaim}, if $u\leq c$ on $\Omega_{R_1}$, we have $u\leq c-K_nR_2^2$ on $\Omega_{R_1+R_2}$ and hence $\|u\|_{L^p(\Omega)}\geq\|u\|_{L^p(\Omega_{R_1+R_2})}\geq |c-K_nR_2^2|\cdot|\Omega_{R_1+R_2}|^{1/p}$. Otherwise there exists $x_0\in\Omega_{R_1}$ such that $u(x_0)>c$.

By Corollary \ref{LapCor} we have that the positive part of $u$, $u_+(x)=\max(u(x),0)$, satisfies a mean value inequality. We apply Theorem \ref{pmean}. In the terminology of the theorem, we have $C=1/|B_1|$, and radius function $R(a)=(1-|a|)/2$, $K=2$ and $A=n$. Thus for each $p\in (0,1)$ we get a mean value inequality for $u_+^p$, with constant $\tilde{C}_p=2R(0)^{-A}(2K^A)^{(1-p)/p}C=2^{1-n+\frac{(n+1)(1-p)}{p}}/|B_1|$, in particular, applied at $x_0$ we obtain
$$c^p\leq u_+^p(x_0)\leq\frac{\tilde{C}_p}{R_1^n}\int_{B_{R_1}(x)}u_+^p(x)\,dx.$$
Bounding the right hand integral by the integral of $|u|^p$ over all of $\Omega$, rearranging and taking $p^{th}$ roots we obtain
$$\|u\|_{L^p(\Omega)}\geq c\left(\frac{R_1^n}{\tilde{C}_p}\right)^{1/p}=c\left(\frac{|B_{R_1}|}{2^{(n+1-2np)/p}}\right)^{1/p}.$$
So in either case we have
$$\|u\|_{L^p(\Omega)}\geq\min\left(c\left(\frac{|B_{R_1}|}{2^{(n+1-2np)/p}}\right)^{1/p},|c-K_nR_2^2|\cdot|\Omega_{R_1+R_2}|^{1/p}\right).$$
\end{proof}

\begin{proof}[Proof of Theorem \ref{HeatThm}]
Fix a natural number $m\geq 3$ and consider an open box $\tilde{B}$ containing $E_m(0,0;1)$. For instance, an elementary examination of the expressions for heatballs given in section \ref{heatstart} shows that we can take $\tilde{B}=B_{m,n}\times(-(2\pi)^{-1},(2\pi)^{-1})$, where $B_{m,n}$ denotes the product of $n$ copies of the interval $(-(m+n)/(\pi e),(m+n)/(\pi e))$. As in section \ref{mvi}, we generate a family of balls $\tilde{B}_r(x,t)=(x,t)+\delta_r(\tilde{B})$, where $\delta_r(x,t)=(rx,r^2t)$ is parabolic scaling. It is clear that $\tilde{B}_r(x,t)$ contains $E_m(x,t;r)$.

For an open, bounded set $O$, we will denote by $O_R^{(m)}$ the set of points in $O$ such that $\overline{\tilde{B}_R(x,t)}$ is contained in $O$.

We first pick $R_1>0$ so that $\Omega_{R_1}$, the set of points $(x,t)$ in $\Omega$ such that $E(x,t;R_1)\subseteq\Omega$, has positive measure. Note that $\Omega_{R_1}$ is open and bounded.

Now we choose $R_2>0$ such that $\tilde{\Omega}:=(\Omega_{R_1})_{R_2}^{(m)}$ has positive measure. Let $c>0$ be such that $c-K_nR_2^2<0$, where $K_n=(n^2/2)|E(0,0;1)|e^{n+2}$ is as in Claim \ref{HeatClaim}. By Claim \ref{HeatClaim}, if $u\leq c$ on $\Omega_{R_1}$, we have $u\leq c-K_nR_2^2$ on $\tilde{\Omega}$ and hence $\|u\|_{L^p(\Omega)}\geq\|u\|_{L^p(\tilde{\Omega})}\geq |c-K_nR_2^2|\cdot|\tilde{\Omega}|^{1/p}$. Otherwise there exists $(x_0,t_0)\in\Omega_{R_1}$ such that $u(x_0,t_0)>c$.

By Proposition \ref{HeatMod} we have that $u_+$, the positive part of $u$, satisfies a mean value inequality. By the non-negativity of $u_+$, we may extend this to the mean value inequalities
$$u_+(x,t)\leq\frac{M_{m,n}}{r^{n+2}}\int_{\tilde{B}_r(x,t)}u_+(y,s)\,dy\,ds.$$
We apply Theorem \ref{pmean}. In the terminology of the theorem, we have $C=M_{m,n}$, $A=n+2$. The radius function is constructed by Proposition \ref{radfun}\,\footnote{By the remarks following that proposition, we can take $K=2$ in Theorem \ref{pmean} by choosing $R(x,t):=\sup\{r>0: \tilde{B}(x,t)\subseteq \tilde{B}\}/2$ in the proof, and clearly $R(0,0)=1/2$.}. Thus for each $p\in (0,1)$ we get a mean value inequality for $u_+^p$, with constant $\tilde{C}_p=2R(0,0)^{-A}(2K^A)^{(1-p)/p}C$, in particular, applied at $(x_0,t_0)$ we obtain
$$c^p\leq u_+^p(x_0,t_0)\leq\frac{\tilde{C}_p}{R_1^{n+2}}\int_{\tilde{B}_{R_1}(x)}u_+^p(x)\,dx.$$
Bounding the right hand integral by the integral of $|u|^p$ over all of $\Omega$, rearranging and taking $p^{th}$ roots we obtain
$$\|u\|_{L^p(\Omega)}\geq c\left(\frac{R_1^{n+2}}{\tilde{C}_p}\right)^{1/p}.$$
So in either case we have
$$\|u\|_{L^p(\Omega)}\geq\min\left(c\left(\frac{R_1^{n+2}}{\tilde{C}_p}\right)^{1/p},|c-K_nR_2^2|\cdot|\tilde{\Omega}|^{1/p}\right).$$

For the particular choices mentioned here, $\tilde{C}_p=2^{-n-1}(2^{n+3})^{(1-p)/p}M_{m,n}$, where $M_{m,n}$ is
$$M_{m,n}=|B_1|\frac{2\pi}{e}\left(\frac{2(m+n)(m+2)}{(4\pi e)(m-2)}\right)^{m/2}\left(\frac{m(m+n)}{m-2}\right)$$
and $|B_1|$ denotes the volume of a unit Euclidean ball in $\mathbb{R}^n$.
\end{proof}

\section{Further comments and results}\label{disc}
We collect here a few results of relevance to the main theorems and regarding the uniform sublevel set problem more generally.

\subsection{The failure of uniform sublevel set estimates for the heat operator in 1 spatial dimension}
With regards to the failure of uniform sublevel set estimates, we note that the counterexample for the Laplacian by Carbery-Christ-Wright \cite{carbery1999multidimensional} relies on Mergelyan's Theorem. One can extend this construction to other differential operators without any difficulty provided an analogue of Mergelyan's Theorem holds for that operator. This is a huge request, and although it may be possible, it is much more reasonable to work with Runge's Theorem, for which many generalisations have been considered - in particular, for the heat operator.

Both Runge's Theorem and Mergelyan's Theorem can be found in Rudin's book \cite{rudin1966real}, and the analogue of Runge's Theorem for the heat operator was given by Jones \cite{jones1975approximation}. Runge's Theorem and Mergelyan's Theorem concern holomorphic functions in the plane, but by considering the real and imaginary parts separately can be considered as a result concerning harmonic functions in the plane. We shall state the necessary consequences of these results during the proof of the forthcoming proposition.

We shall establish, as claimed in the introduction, that even with one spatial dimension we have no uniform sublevel estimate for the heat operator. At the same time, we shall re-establish the Carbery-Christ-Wright counterexample, using an alternative proof communicated by James Wright. We stress that this statement is by no means as general as we could make it, in light of other situations where a Runge-type theorem holds - see, for instance, Kalmes \cite{kalmes2019power}.

\vspace{0.3cm}

\begin{prop}\label{lhfailure}
Consider the operators $\Delta=\partial_{xx}^2+\partial_{tt}^2$ and $H=\partial_{xx}^2-\partial_t$ on $[0,1]^2$. For each $\varepsilon>0$, there exists a smooth $u$ with $\Delta u \equiv 1$ on $[0,1]^2$ but also satisfying the estimate $|\{x\in[0,1]^2:|u(x)|\leq\varepsilon\}|\geq 1-\varepsilon$. Furthermore, for each $\varepsilon>0$, there exists a smooth $u$ with $Hu \equiv 1$ on $[0,1]^2$ but also $|\{x\in[0,1]^2:|u(x)|\leq\varepsilon\}|\geq 1-\varepsilon$.
\end{prop}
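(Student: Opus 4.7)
The strategy is to reduce the inhomogeneous equation to a homogeneous one and then apply a Runge-type approximation theorem. Fix the particular solution $u_0(x,t) = x^2/2$, which satisfies both $\Delta u_0 = 1$ and $Hu_0 = 1$ (the $\partial_{tt}^2$ term and the $-\partial_t$ term both annihilate $u_0$). Writing $u = u_0 + v$, the condition $\Delta u = 1$ becomes $\Delta v = 0$ (and similarly $Hu = 1$ becomes $Hv = 0$), so in each case the problem reduces to finding a harmonic (resp.\ caloric) function $v$ that very nearly cancels $u_0$ on a subset of $[0,1]^2$ of measure at least $1-\varepsilon$.

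Next, I would construct a compact set $K \subset [0,1]^2$ with three properties: (i) $|K| \geq 1-\varepsilon$, (ii) $K$ has empty interior, and (iii) $\mathbb{R}^2 \setminus K$ is connected. A concrete choice: let $C \subset [0,1]$ be a fat Cantor set with $|C|^2 \geq 1-\varepsilon$, and take $K = C \times C$. The emptiness of the interior is immediate, and the complement is connected because any point outside $K$ has at least one coordinate outside $C$ and hence lies in an open strip meeting the complement of $[0,1]^2$. This compact set will serve as the region on which we arrange $|u|\leq \varepsilon$.

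For the Laplacian case, Mergelyan's theorem in its harmonic form (e.g.\ via taking real parts of the complex-analytic statement) says that on a compact $K \subset \mathbb{R}^2$ with connected complement, every continuous function that is harmonic on $K^{\circ}$ can be uniformly approximated by harmonic polynomials. Since $K^{\circ} = \emptyset$, the harmonicity condition is vacuous, so I may apply this to the continuous function $-u_0\big|_K$ to obtain a harmonic polynomial $v$ with $\|u_0 + v\|_{L^{\infty}(K)} \leq \varepsilon$. Then $u = u_0 + v$ is smooth on $[0,1]^2$, satisfies $\Delta u \equiv 1$, and $|u| \leq \varepsilon$ on $K$; thus the sublevel set $\{|u| \leq \varepsilon\}$ contains $K$ and has measure at least $1-\varepsilon$.

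For the heat operator, the plan is identical, with Jones's parabolic analog of Runge's theorem \cite{jones1975approximation} replacing Mergelyan. I expect the main obstacle to lie here: Jones's hypotheses reflect the time-directed, non-symmetric nature of $H$ and are not simply a connectedness condition on the spatial complement. The Cartesian product construction $C \times C$ may need to be replaced (or supplemented) by a set adapted to the parabolic structure--for instance, arranging that each point of $K$ is joined to the ``future boundary'' of $[0,1]^2$ by a path through the complement that respects the time-direction--so that Jones's criterion for caloric polynomial approximation is satisfied. Once such a $K$ is constructed with $|K| \geq 1-\varepsilon$, Jones's theorem produces a caloric polynomial $v$ with $\|u_0 + v\|_{L^{\infty}(K)} \leq \varepsilon$, and $u = u_0 + v$ gives the desired counterexample. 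Everything else, including the verification that $u$ is smooth and that the sublevel set is large, is as in the Laplacian case.
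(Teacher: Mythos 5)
Your Laplacian argument is essentially sound and is in fact the original Carbery--Christ--Wright route: reduce to approximating a continuous function on a compact set of large measure, empty interior and connected complement by harmonic polynomials, via the Lavrentiev/Mergelyan theorem (take real parts of holomorphic polynomial approximants). The paper deliberately avoids Mergelyan and instead uses only Runge-type theorems, precisely so that the same argument covers the heat operator, for which no Mergelyan analogue is available. That is where your proposal has a genuine gap.

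For the heat case you write that Jones's theorem will ``produce a caloric polynomial $v$ with $\|u_0+v\|_{L^\infty(K)}\leq\varepsilon$,'' but Jones's result is a Runge-type theorem: it approximates functions that are \emph{already temperatures on an open neighbourhood} $U$ of $K$ by global temperatures, subject to the topological condition that the $t$-slices of $U^c$ have no compact component. It does not approximate an arbitrary continuous function (such as $-u_0$ restricted to a fat Cantor product) on a compact set with empty interior. So you are missing the intermediate step of exhibiting a function caloric near $K$ that already nearly cancels the particular solution there. The paper supplies this by choosing the particular solution to depend only on $t$ (namely $-t$ for $H$, $t^2/2$ for $\Delta$) and taking $K$ to be a union of rectangles of height $\approx\delta$ in the $t$-direction: on a small neighbourhood $U$ of $K$ the locally constant function equal to $-c_i$ on the $i$-th component is trivially a temperature and approximates $-t$ to within $O(\delta)$, and the $t$-slices of $U$ are intervals, so Jones's hypothesis holds. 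Your choice $u_0=x^2/2$ actively obstructs this repair: to make a locally constant caloric function approximate $x^2/2$ you would need components thin in $x$ (vertical strips), and then the $t$-slices of $U^c$ acquire compact components (the gaps between strips), violating Jones's condition; the same problem afflicts any neighbourhood of $C\times C$. So the heat half needs both a different particular solution and a differently shaped $K$, and as written the plan does not go through.
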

\begin{proof}
We will prove both statements simultaneously, indicating the differences as they appear.

The Runge theorem for the Laplacian says that for a harmonic function on an open set $U$ containing a compact set $K$ to be uniformly approximated on $K$ by polynomials harmonic on all of $\mathbb{R}^2$, it is enough that the complement of $K$ is connected.

The Runge theorem for the heat operator says that a temperature (a solution to the heat equation) on an open set $U$ containing a compact set $K$ may be uniformly approximated on $K$ by temperatures on all of $\mathbb{R}^{n+1}$ provided that the $t$-slices of the complement of $U$ have no compact component, that is, the sets $\{x\in\mathbb{R}^n:(x,t)\in U^{c}\}$ have no compact component. For us, then, it is clearly sufficient that the $t$-slices of $U$ be intervals.

In each case, our compact set will be a collection of about $1/\delta$ rectangles of width $\delta$ separated by tiny gaps and covering most of the unit square. To be precise, consider some $0<\delta<1/2$ and let $K$ be the union of the disjoint rectangles
$$K:=\bigcup_{i=1}^{\left\lfloor\frac{4-\delta^2}{4\delta+\delta^2}\right\rfloor} \left[\frac{\delta}{4},1-\frac{\delta}{4}\right]\times\left[i\left(\delta+\frac{\delta^2}{4}\right)-\delta,i\left(\delta+\frac{\delta^2}{4}\right)\right],$$
see Figure \ref{Kset}. One sees that $\lfloor(4-\delta^2)/(4\delta+\delta^2)\rfloor(\delta+\delta^2/4)\leq 1-(\delta/4)$, and that the rectangles are separated by $\delta^2/4$, so for instance the $\delta^2/16$ neighbourhood of $K$ (say in the supremum norm) is also a disjoint collection of rectangles. This neighbourhood will be our open set $U$.

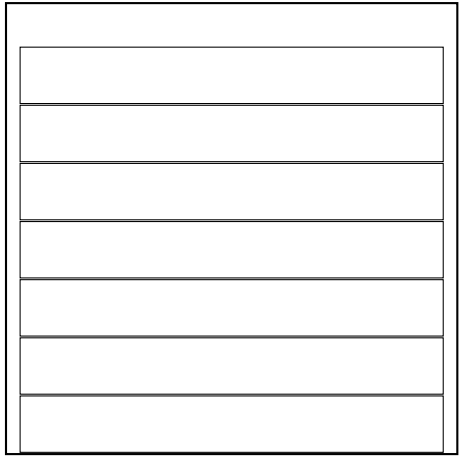
\begin{figure}
\begin{center}
\begin{tikzpicture}[x=6cm,y=6cm]
\draw[thick] (0,0) rectangle (1,1);
\draw (0.03125,0.00390625) rectangle (0.96875,0.12890625);
\draw (0.03125,0.1328125) rectangle (0.96875,0.2578125);
\draw (0.03125,0.26171875) rectangle (0.96875,0.38671875);
\draw (0.03125,0.390625) rectangle (0.96875,0.515625);
\draw (0.03125,0.51953125) rectangle (0.96875,0.64453125);
\draw (0.03125,0.6484375) rectangle (0.96875,0.7734375);
\draw (0.03125,0.77734375) rectangle (0.96875,0.90234375);
\end{tikzpicture}
\end{center}
\caption{The set $K\subseteq [0,1]^2$ for $\delta=1/8$.}\label{Kset}
\end{figure}

Consider $v(x,t)=t^2/2$ for the $\Delta$ case, $v(x,t)=-t$ for the $H$ case. Then $\Delta v$, respectively $Hv$, is identically $1$. Now on each of the thin rectangles of $U$, pick the $t$ coordinate of some point in the rectangle - call it $c$ - and define $w_1(x,t)$ on that component to be $c^2/2$ (respectively $-c$). Since $w_1$ is locally constant, it is harmonic (respectively, a temperature) on $U$. It is easily seen that, since the side length along the $t$ axis of each such rectangle is $\delta+(\delta^2/8)$, $w_1$ uniformly approximates $v$ on $U$ to within $\delta+(\delta^2/8)$.

By Runge's theorem for harmonic functions, it is clear that we can approximate $w_1$ uniformly to within $\delta$ on $K$ by a function $w_2$ harmonic on all of $\mathbb{R}^2$. Thus $w_2$ approximates $v$ on $K$ to within $2\delta+(\delta^2/8)$. Hence $u:=v-w_2$ satisfies $\Delta u \equiv 1$ and $|u(x)|\leq 2\delta+(\delta^2/8)$ on $K$. The analogous statement for the $H$ case is true, since the $t$-slices of $U$ are intervals, and so the we can apply the Runge theorem for temperatures. 

It remains to note that $K$ has large measure. Indeed, the measure of $K$ is
\begin{align*}
\delta\left(1-\frac{\delta}{2}\right)\left\lfloor\frac{4-\delta^2}{4\delta+\delta^2}\right\rfloor&\geq\delta\left(1-\frac{\delta}{2}\right)\left(\frac{4-\delta^2}{4\delta+\delta^2}-1\right)\\
&=\frac{\delta^3+3\delta^2-14\delta+8}{8+2\delta}>\frac{8-14\delta}{8+2\delta}=1-\frac{16\delta}{8+2\delta}\\
&>1-2\delta.
\end{align*}
Taking $2\delta+(\delta^2/8)\leq\varepsilon$ completes the proof.
\end{proof}

\subsection{Failure of many types of inequality for non-linear operators}
We are considering a hierarchy of inequalities associated to differential operators. Near the bottom of this hierarchy, we have lower bounds on the $L^1$ norm, which hold for any linear differential operator with sufficiently regular coefficients. Weaker than this was an estimate of the form
$$\|u\|_{L^p(\Omega)}\cdot|\{x\in \Omega:|u(x)|\geq c\}|^{1/p'}\geq c$$
for some $1\leq p\leq\infty$. In general, the main structural aspect of this inequality is that we are using some $L^p$ norm to balance the potentially small contribution of the superlevel set term raised to some power.

It makes sense to ask whether such inequalities hold in the non-linear case, or more generally for classes of real-valued functions not arising from having some differential operator applied to it be bounded away from $0$, or perhaps for complex-valued functions having some derivative bounded away from $0$ in modulus. This is perhaps unlikely to be of much interest in general; nevertheless, for a fairly simple non-linear differential operator, we can obtain a rather striking failure of such inequalities - we can find a sequence of functions with $Du_N\geq 1$ but $u_N$ uniformly tending to $0$ - so in fact, the superlevel set is empty for some functions, and we cannot obtain a lower bound even on the $L^\infty$ norm.

We shall consider $Du=-\det \text{Hess }u=(\partial^2_{xy}u)^2-(\partial^2_{xx}u)(\partial^2_{yy}u)$ on $[0,1]^2$. In general, the determinant of the Hessian of a function on $\mathbb{R}^n$ is a natural quantity to consider, being the product of the eigenvalues of the Hessian, which we can think of as being a measure of curvature in some sense (indeed, at critical points, this gives the product of the principal curvatures of the graph - the Gaussian curvature). It has some homogeneity $D(\lambda u)=\lambda^nDu$ which means that sublevel set and $L^p$ lower bounds would scale.

Furthermore, under additional assumptions that $u$ is strictly convex and non-negative on a convex domain, there are uniform sublevel set estimates associated to the determinant of the Hessian with power $n/2$, see Carbery \cite{carbery2010uniform}. It is shown that uniform sublevel set estimates fail otherwise, but we shall consider an example from a paper of Gressman \cite{gressman2011uniform}, which considers some uniform sublevel set estimates and gives remarks on situations when the uniformity fails.

The family of functions we consider is $u_N(x,y)=N^{-1}(e^x\sin(Ny)+e)$. Note that these are non-negative on the convex domain $[0,1]^2$. Clearly we have that $Du_N=e^{2x}\geq 1$, but given any $c$, we can always take $N$ large enough that $\{x\in[0,1]^2:|u_N(x)|\geq c\}$ is empty, so none of the types of inequality we have been considering can possibly hold.

\subsection{Extensions of the main theorems}\label{exten}
It follows from the uniformity of the constant in the main theorems that we can extend them to some other situations. For instance, though we were implicitly considering highly regular functions in the proofs, uniformity allows us to take limits in the inequality to obtain results for more ``rough" functions. By smoothing out a function that only satisfies a differential inequality in a weak/distributional sense, we can apply the inequality first to smooth approximations and then use a standard limiting argument to conclude that it holds for more general functions. We shall give no precise formulations, but simply note that our results can thus be extended to greater generality if desired.

Another property is that we can lift these inequalities to higher dimensions, for instance, Theorem \ref{LapThm} also implies that a lower bound in $L^p$ holds for the Laplacian in the first two variables considered as a differential operator on $\mathbb{R}^3$. Concretely, we can say the following.

\vspace{0.3cm}

\begin{prop}\label{dimup}
Let $\Omega_1\subseteq\mathbb{R}^{n_1}$ be open and bounded and suppose that within some set of functions $S$ on $\Omega_1$, we have the lower bound $$\|u\|_{L^p(\Omega_1)}\geq c.$$
Let $\Omega_2$ be a bounded open set in $\mathbb{R}^{n_2}$ and suppose $\Omega\subseteq\mathbb{R}^{n_1+n_2}$ contains $\Omega_1\times\Omega_2$. Then for functions $v$ on $\Omega$ such that their restrictions to $\Omega_1\times\{y\}$ lies in $S$ for each $y\in\Omega_2$, we have
$$\|v\|_{L^p(\Omega)}\geq c|\Omega_2|^{1/p}.$$
\end{prop}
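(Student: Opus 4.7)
The proof is essentially an application of Fubini's theorem combined with the hypothesis applied slicewise. The plan is as follows.

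First, since $\Omega \supseteq \Omega_1 \times \Omega_2$ and $|v|^p$ is non-negative, I would reduce to bounding the $L^p$ quasi-norm of $v$ on the product set, namely
$$\|v\|_{L^p(\Omega)}^p \;\geq\; \|v\|_{L^p(\Omega_1 \times \Omega_2)}^p \;=\; \int_{\Omega_1 \times \Omega_2} |v(x,y)|^p \, dx\, dy.$$
Then I would apply Fubini's theorem (valid since $|v|^p \geq 0$, so Tonelli applies with no integrability assumption) to rewrite this as an iterated integral
$$\int_{\Omega_2} \left( \int_{\Omega_1} |v(x,y)|^p \, dx \right) dy.$$

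Next, I would use the hypothesis slicewise. For each fixed $y \in \Omega_2$, the function $x \mapsto v(x,y)$ lies in $S$ by assumption, so the given lower bound yields
$$\int_{\Omega_1} |v(x,y)|^p \, dx \;=\; \|v(\cdot, y)\|_{L^p(\Omega_1)}^p \;\geq\; c^p.$$
Substituting this into the iterated integral and integrating the constant $c^p$ over $\Omega_2$ gives
$$\|v\|_{L^p(\Omega)}^p \;\geq\; c^p \, |\Omega_2|,$$
and taking $p$-th roots yields the claimed bound $\|v\|_{L^p(\Omega)} \geq c |\Omega_2|^{1/p}$.

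There is essentially no obstacle here: the argument is a direct Fubini computation, and the only mild subtlety is being sure that the slicewise application of the hypothesis is legitimate, which it is precisely because the set $S$ is a class of functions on $\Omega_1$ and the restrictions $v(\cdot, y)$ land in $S$ by assumption. Note also that the argument works identically for $0 < p < 1$ (treating $\|\cdot\|_{L^p}$ as a quasi-norm) since Tonelli does not require $p \geq 1$.
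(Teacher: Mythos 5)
Your argument is correct and is essentially identical to the paper's proof, which writes the same Tonelli computation in nested-norm form as $\|v\|_{L^p(\Omega_1\times\Omega_2)}=\|(\|v(\cdot,y)\|_{L^p(\Omega_1)})\|_{L^p(\Omega_2)}\geq c|\Omega_2|^{1/p}$. No issues.
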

\begin{proof}
We write $(x,y)$ for an element of $\mathbb{R}^{n_1}\times\mathbb{R}^{n_2}$. Let $v$ be as in the statement, then $\|v(\cdot,y)\|_{L^p(\Omega_1)}\geq c$. It is clear that
$$\|v\|_{L^p(\Omega_1\times\Omega_2)}=\|(\|v(\cdot,y)\|_{L^p(\Omega_1)})\|_{L^p(\Omega_2)}\geq\|c\|_{L^p(\Omega_2)}=c|\Omega_2|^{1/p}.$$
It follows that $\|v\|_{L^p(\Omega)}\geq c|\Omega_2|^{1/p}$.
\end{proof}

We can also consider the effect of diffeomorphisms on these inequalities. Suppose we have an inequality $\|u\|_{L^p(\Omega)}\geq c$ for some collection of functions $u$ on $\Omega$ and a diffeomorphism $\phi:\Omega\rightarrow\Omega'$. By the change of variables formula we have
$$\int_\Omega|u(x)|^p\,dx=\int_{\Omega'}|u(\phi^{-1}(x'))|^p|\det J\phi^{-1}(x')|\,dx'$$
where $J\phi^{-1}$ is the Jacobian of $\phi^{-1}$. Let $M:=\sup\{|\det J\phi^{-1}(x')|:x'\in\Omega'\}$. Then we have
$$\|u\|_{L^p(\Omega)}\leq \|u\circ\phi^{-1}\|_{L^p(\Omega')}M^{1/p}.$$
Thus if $\|u\|_{L^p(\Omega)}\geq c$, we also have $\|u\circ\phi^{-1}\|_{L^p(\Omega')}\geq cM^{-1/p}$.
Hence we obtain lower bounds in $L^p$ for the collection of $u\circ\phi^{-1}$ on $\Omega'$. Note that passing between these classes of functions presents no loss when the Jacobian determinant is constant, such as in the case of invertible linear transformations.

An important consequence of this is the following: Suppose we know an $L^p$ lower bound holds in the class where $Du(x)\geq 1$, for $D$ a differential operator. For clarity say that $D$ is written in terms of ``$x$ coordinates". Then if we express $D$ in terms of $x'$ coordinates, with $\phi$ being the diffeomorphism giving this change of coordinates, we know that in the class where $D(u\circ\phi^{-1})(x')\geq 1$, an $L^p$ lower bound holds. In short, the class of differential operators for which $L^p$ lower bounds hold is invariant under change of coordinates.

As an example, let us consider which constant coefficient linear differential operators satisfy $L^p$ lower bounds. By using invertible linear transformations, one can reduce the cases to study to certain canonical forms. For example, the theory of quadratic forms tells us that to understand the homogeneous second order examples in $\mathbb{R}^n$, we need only understand those having associated polynomials of the form
$$\sum_{i=1}^{m_1}x_i^2-\sum_{j=m_1+1}^{m_2}x_j^2$$
for $m_2\leq n$.

In $\mathbb{R}^2$, this allows us to give a complete picture - in fact, $L^p$ lower bounds hold in all cases. The quadratic form $x_1^2+x_2^2$ corresponds to the Laplacian, hence follows from Theorem \ref{LapThm}. All the others satisfy the Carbery-Christ Wright Theorem, so in fact a uniform sublevel estimate holds. This is obvious in all cases but $x_1^2-x_2^2$, but this is $(x_1-x_2)(x_1+x_2)$, so setting $x=x_1-x_2$ and $y=x_1+x_2$, we obtain $xy$ via change of coordinates, which is of the correct form. Summarising, we have:

\vspace{0.3cm}

\begin{prop}
For every linear homogeneous second order differential operator $D$ in $\mathbb{R}^2$, $\Omega\subseteq\mathbb{R}^2$ open and bounded, $p\in(0,1)$, there exists a constant $c_p$ depending only on $\Omega$, $p$ and $D$ such that whenever $Du\geq 1$ in $\Omega$, we have $\|u\|_{L^p(\Omega)}\geq c_p$.
\end{prop}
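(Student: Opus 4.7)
My plan is to classify $D$ up to an invertible linear change of coordinates and reduce to cases already handled by Theorem \ref{LapThm} or by the Carbery-Christ-Wright Uniform Sublevel Set Theorem. Write $D = a\partial_{xx}^2 + 2b\partial_{xy}^2 + c\partial_{yy}^2$ and let $A_D$ denote its symmetric coefficient matrix. A direct chain-rule computation shows that if $A$ is an invertible $2\times 2$ matrix and $u'=u\circ A^{-1}$, then $Du(x)=(D'u')(Ax)$ where $D'$ has coefficient matrix $AA_DA^T$. By Sylvester's law of inertia---possibly after replacing $D$ by $-D$, which is legitimate since $Du\geq 1$ is equivalent to $(-D)(-u)\geq 1$ and $\|u\|_{L^p}=\|-u\|_{L^p}$---one can choose $A$ so that $AA_DA^T$ is one of $\mathrm{diag}(1,1)$, $\mathrm{diag}(1,-1)$, $\mathrm{diag}(1,0)$, or the zero matrix. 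The zero case is vacuous because $Du\geq 1$ is then unsatisfiable, and in the other three cases the change-of-variables discussion immediately preceding the statement reduces the problem to proving an $L^p$ lower bound for the canonical operator $D'$ on the transformed domain $A(\Omega)$, with the Jacobian factor (a constant depending only on $A$, hence only on $D$) absorbed into the final $c_p$.

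For $D'=\Delta$ the bound is exactly Theorem \ref{LapThm}. For $D'=\partial_{xx}^2$ the hypothesis $D'u'\geq 1$ is that the monomial derivative $D^\beta u'$ with $\beta=(2,0)$ is at least $1$, so the Carbery-Christ-Wright Uniform Sublevel Set Theorem, applied on any fixed cube $Q\subseteq A(\Omega)$ (after rescaling $Q\to[0,1]^2$, which only multiplies the lower bound on $D^\beta u'$ by a positive power of the side length), yields a uniform sublevel estimate $|\{x\in Q: |u'(x)|\leq\varepsilon\}|\leq C\varepsilon^\delta$; Chebyshev's inequality, as in the introduction, then converts this into a uniform lower bound for $\|u'\|_{L^p(Q)}$, which since $Q\subseteq A(\Omega)$ is inherited by $\|u'\|_{L^p(A(\Omega))}$. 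For $D'=\partial_{xx}^2-\partial_{yy}^2$, one further linear change of coordinates $(y_1,y_2)=(x_1+x_2,x_1-x_2)$ conjugates $D'$ into a positive multiple of the monomial derivative $\partial_{y_1 y_2}^2$, reducing it to the previous case.

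The main obstacle, such as it is, lies only in keeping the accounting honest: tracking how Jacobian determinants enter through the change-of-variables formula, how the Carbery-Christ-Wright constant on a sub-cube of $A(\Omega)$ scales with the cube's side length, and how the constant produced by the Chebyshev step depends on $p$ and on $|Q|$. Each of these factors depends only on $D$, $\Omega$, and $p$, so all can be absorbed into the final $c_p$; no new analytic ideas are required beyond the tools already assembled in the paper, and the finite case analysis above exhausts all linear homogeneous second-order operators on $\mathbb{R}^2$.
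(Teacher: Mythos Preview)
Your proposal is correct and follows essentially the same route as the paper: reduce $D$ by an invertible linear change of coordinates (Sylvester's law of inertia) to one of the canonical forms $\Delta$, $\partial_{xx}^2-\partial_{yy}^2$, $\partial_{xx}^2$, or $0$, then invoke Theorem~\ref{LapThm} for the elliptic case and the Carbery--Christ--Wright Theorem (with a further linear change turning $\partial_{xx}^2-\partial_{yy}^2$ into a mixed monomial derivative) for the remaining nontrivial cases. You are somewhat more explicit than the paper about the sign normalisation $D\mapsto -D$ and the Jacobian/scaling bookkeeping, but the argument is the same.
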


\subsection{The $L^p$ means, $p\leq 0$}\label{Lp0}
In this section we will shall attempt to provide some evidence to suggest that uniform lower bounds on the $L^p$ quasi-norms are indeed a natural consideration in relation to the uniform sublevel set problem by observing that, for $p<0$, uniform lower bounds on the $L^p$ means are approximately equivalent to uniform sublevel set estimates. For the reader unfamiliar with these means, we shall review some basic facts, the proofs of these and more can be found in the chapter on integral means in the book of Hardy, Littlewood \& P\'olya \cite{hardy1952inequalities}.

Throughout this section, we will use $\|u\|_p$ to denote a normalised $L^p$ mean on a bounded open set $\Omega$. For $p\neq 0$, these are defined by
$$\|u\|_p:=\left(\frac{1}{|\Omega|}\int_\Omega |u(x)|^p\,dx\right)^{1/p}$$
and for $p=0$, we define
$$\|u\|_0:=\exp\left(\frac{1}{|\Omega|}\int_\Omega \log(|u(x)|)\,dx\right).$$
Here we are using the conventions that $0$ and $\infty$ are reciprocal to each other, and $\exp(-\infty)=0$. Note that the positive and negative parts of $\log(|u|)$ could have integral $\infty$ and $-\infty$ respectively, and the expression is not well defined. In this case it makes sense to set $\|u\|_0=\infty$. We also note that there is a ``$-\infty$" mean, the essential infimum of $|u|$, but this will not be important here.

Note that for $p\leq 0$, any function equal to $0$ on a set of positive measure will have $\|u\|_p=0$, hence non-zero functions can have $p$-mean equal to $0$, and so these are not quasi-norms - the word ``mean" is appropriate in this context.

With this normalisation, one has by H\"older's inequality that $\|u\|_p\leq\|u\|_q$ for $0<p\leq q$, and it turns out that this extends to the whole extended real line - we have $\|u\|_p\leq\|u\|_q$ whenever $p\leq q$. Extending the result to $0=p\leq q$ is a simple consequence of Jensen's inequality, since
\begin{align*}
\left(\exp\left(\frac{1}{|\Omega|}\int_\Omega \log(|u(x)|)\,dx\right)\right)^q&=\exp\left(\frac{1}{|\Omega|}\int_\Omega \log(|u(x)|^q)\,dx\right)\\
&\leq\frac{1}{|\Omega|}\int_\Omega |u(x)|^q\,dx.
\end{align*}
The result for $p\leq q\leq 0$ follows from the easy formula $\|u\|_p=(\|(1/u)\|_{-p})^{-1}$ which holds for all $p$ on the extended real line. The case $p\leq 0\leq q$ is immediate from these cases.

For reference, we note some common names for these means. The discrete analogues of the means for $p=1,0,-1$ are the arithmetic, geometric and harmonic means. The name ``geometric mean" is sometimes applied in the general case, as is the alternate name ``exp-log average". For $p\neq 0,\infty,-\infty$, these are referred to as power means, and for $p\neq\infty,-\infty$, they are examples of generalised means.

It is known that, provided that for some positive $p$, the $L^p$ mean of $u$ is finite, the limit as $p$ decreases to $0$ of the $p$-means is the $L^0$ mean. Thus it is natural to ask if, in a given set of functions, we have a lower bound on the normalised $L^p$ means for $p>0$ that is independent of $p$, for then we can pass to one for $L^0$ in the limit.

If possible, one will require some careful arguments. For instance, the reader will note that in Proposition \ref{dimup}, passing from a uniform lower bound on the $\Omega_1$ slices that is independent of $p$ to one on $\Omega$ is possible only for $\Omega_1\times\Omega_2$. Similarly, in the discussion following Proposition \ref{dimup}, one should ask that the Jacobian determinant be constant, as in the case of a linear transformation. Importantly, we see from applying the normalisation to the constants of section \ref{complete} that we cannot obtain a bound independent of $p$ in the main theorems - whether some finer methods can be used to demonstrate this remains to be determined.

Considering the $L^p$ means for negative $p$, we will show that (uniform) lower bounds on these $L^p$ means imply (uniform) sublevel set estimates, and we can also establish an approximate converse. We have by Chebyshev's inequality that
$$|\{x\in\Omega:|u(x)|\leq\varepsilon\}|\varepsilon^{p}\leq\int_\Omega |u(x)|^p\,dx$$
so upper bounds on the right hand side translate to sublevel set estimates with exponent $-p$ (note that $-p$ is positive). In particular, rearranging $\|u\|_p\geq c$ and applying the above gives
$$|\{x\in\Omega:|u(x)|\leq\varepsilon\}|\leq c^p|\Omega|\varepsilon^{-p}.$$
Certainly, then, we cannot have uniform lower bounds on $L^p$ means for any negative $p$ in the sets of functions considered in Theorems \ref{LapThm} and \ref{HeatThm}, for this would contradict the failure of uniform sublevel set estimates given by Proposition \ref{lhfailure} (recall that this family of counterexamples can be trivially extended to give counterexamples on cubes in higher dimensions).

This simple observation is not new, and there are already papers dealing with uniform upper bounds on integrals $\int_\Omega |u(x)|^{-\delta}\,dx$, $\delta>0$, see Phong-Stein-Sturm \cite{phong1999growth}, also Phong \& Sturm \cite{phong2000algebraic}. The only difference here is that we have rearranged this bound and stated it in the context of lower bounds on $L^p$ means, which perhaps suggests that this framework is a natural one. In this direction, we note also the paper of Nazarov-Sodin-Volberg \cite{nazarov2002geometric} which provides some lower bounds on $L^p$ means with $p\leq 0$ for polynomials on convex sets in terms of some suitable quantities.

Further evidence to support this naturality comes in the form of an approximate converse to this implication - that if we can obtain a uniform sublevel set estimate
$$|\{x\in\Omega:|u(x)|\leq\varepsilon\}|\leq C\varepsilon^\delta$$
then we also have uniform lower bounds on the each $L^p$ mean for $p>-\delta$.

Let $p\in(-\delta,0)$ and let $k_0$ be the smallest integer $k$ such that $2^{k\delta}C\geq|\Omega|$. Then we may decompose $\Omega$ into sets $E_k:=\{x\in\Omega:2^{k-1}<|u(x)|\leq 2^k\}$ for $k\leq k_0$, along with $F:=\{x\in\Omega:2^{k_0}<|u(x)|\}$ and $G:=\{x\in\Omega:u(x)=0\}$. For $F$ we use the trivial estimate $|F|\leq|\Omega|$, and for the other two sets we have $|E_k|\leq 2^{k\delta}C$ and $|G|=0$ by the sublevel set estimate. Hence
\begin{align*}
\int_\Omega|u(x)|^p\,dx&=\sum\limits_{k\leq k_0}\int_{E_k}|u(x)|^p\,dx+\int_F|u(x)|^p\,dx+\int_G|u(x)|^p\,dx\\
&\leq\sum\limits_{k\leq k_0}2^{k\delta+(k-1)p}C+2^{k_0p}|\Omega|\\
&=2^{-p+k_0(\delta+p)}C\sum\limits_{k\leq 0}2^{k(\delta+p)}+2^{k_0p}|\Omega|\\
&=\frac{2^{-p+k_0(\delta+p)}C}{1-2^{-\delta-p}}+2^{k_0p}|\Omega|.
\end{align*}
Thus we obtain
$$\|u\|_p\geq\left(\frac{2^{-p+k_0(\delta+p)}C}{(1-2^{-\delta-p})|\Omega|}+2^{k_0p}\right)^{1/p}$$
where we note that the right hand side depends only on $C$, $\delta$, $\Omega$ and $p$, so uniformity of the constant in the sublevel set estimate gives uniformity in this lower bound.

One might ask if the converse actually holds, that is, does a positive lower bound on an $L^p$ mean hold if and only if sublevel set estimates hold with exponent $-p$? A negative answer is given by the standard examples $x^k$ on $(0,1)$, $k\in\mathbb{N}$. These satisfy sublevel set estimates with exponent $1/k$. However, their $L^{-1/k}$ means are $0$, since
$$\left(\int_0^1(x^k)^{-1/k}\,dx\right)^{-k}=\left(\int_0^1\frac{1}{x}\,dx\right)^{-k}=0.$$

\section{Some questions}\label{ques}
The central question arising from this paper concerns the generality in which $L^p$ lower bounds hold. We pose the following question.

\textbf{Question.} Given a linear differential operator $D$ on an open set $\Omega$, does there exist an exponent $p\in (0,1)$ and a constant $c_p>0$ depending on $D$, $n$, $\Omega$ and $p$ such that $\|u\|_{L^p(\Omega)}\geq c_p$ holds whenever $Du\geq 1$ in $\Omega$?

We could also pose this for other classes of functions, for non-linear differential operators, and possibly for $1\leq p\leq\infty$, but these are not the main cases of interest for us.

It could be that, just as for the $L^1$ norm, such inequalities hold for any linear differential operator. However, a reflection on the above proofs suggests that either another approach is needed to show this, or, if there are counterexamples, where we might start to look for them. One of the key elements of the proofs was the relation of a rate of change for a parameterised family of averages to the differential operator in question, either by means of a derivative or just a nicely-quantified difference, such that we can bound the difference from below in a uniform way.

Parameterised families of averages such as this occur in the study of mean value formulae for PDE, where often one seeks to establish that such a family of averages is constant if and only if a function satisfies a certain PDE or family of PDEs. In fact, if we allow for averages against measures that are not necessarily positive, a vast number of linear PDE solutions can be characterised this way. Pokrovskii \cite{pokrovskii1998mean} establishes a method for constructing such measures, generalising the ideas of Zalcman \cite{zalcman1973mean}.

The step involving passage from mean value inequalities for $u$ to mean value inequalities for the positive part $u_+$, and hence for $u_+^p$, seems to require that the measure be positive, which appears to be a great restriction. We also need our averages to be with respect to a measure which can at least be locally bounded by some power of the Lebesgue measure.

That such convenient families of averages exist is perhaps related to the existence of adequate theories of subsolutions and supersolutions where we have access to results such as maximum principles. Indeed, our arguments were based on the idea that for such operators, inequalities of the form $Du\geq 1$ represent a stronger property than that of a usual subsolution, where by comparison with a constant solution we would expect a quantifiably large deviation. This naturally suggests the possibility of extensions of the main results to more general elliptic and parabolic operators, and that counterexamples might be found by considering those differential operators for which such theories do not hold, such as the wave operator.

However, we must be careful to remember that this is not the only way that $L^p$ lower bounds can arise. Consider the wave operator $\partial^2_t-\Delta_x$. In one spatial dimension, the wave operator can be expressed as $\partial^2_t-\partial^2_x=(\partial_t-\partial_x)(\partial_t+\partial_x)$, which as we saw before can be written as $\partial_{x'}\partial_{y'}$ by the change of co-ordinates $x'=t-x, y'=t+x$, and so this in fact satisfies a uniform sublevel set estimate.

But in more than one spatial dimension, simply by considering functions that are constant in $t$, we see that no uniform sublevel set estimate holds due to the failure for the Laplacian. However, the wave operator is neither elliptic nor parabolic, which suggests that the methods of this paper may not be helpful for establishing an $L^p$ lower bound, so this could be an interesting case to consider.

Though tangential to the main question of the paper, it is also worth briefly posing some questions based on the discussion of mean value inequalities from section \ref{mvi}. Namely, in what generality can we prove results for passing from mean value inequalities for $f$ to those for $f^p$? For which families of ``balls" does this make sense? Could this be done in settings other than $\mathbb{R}^n$, such as Lie groups or more general manifolds? Additionally, for a given family of balls can we determine the best constant $\tilde{C}_p$ in Theorem \ref{pmean}?

Finally, we note that in the context of section \ref{Lp0}, it makes sense to ask more generally: for which sets of functions on an open, bounded $\Omega$ does there exist $p\in \mathbb{R}$ and a constant $c_p>0$ depending on $n$, $\Omega$ and $p$ such that the normalised mean $\|u\|_p\geq c_p$ whenever $u$ is in that set of functions? In particular, we ask this for sets of functions satisfying $Du\geq 1$ for some linear differential operator $D$, and furthermore it would be interesting to see if Theorems \ref{LapThm} and \ref{HeatThm} can be extended to $p=0$.

\textit{Acknowledgements.} The author is supported by a UK EPSRC scholarship at the Maxwell Institute Graduate School. The author would like to thank Prof. James Wright for many helpful discussions, along with various suggestions and improvements; in particular the inclusion of the discussion on failure of uniform sublevel set estimates via Runge-type theorems in section \ref{disc}. The author would also like to thank the referee of an earlier submission, from which many parts were substantially rewritten and included in the present paper.

\appendix
\section{Calculation of the maximum}\label{maxcalc}
In this section, we shall determine the maximum of the function
\begin{equation}\label{kmn}
\kappa_{m,n}(y,s)=\frac{|B_1|}{2m+4}\tilde{A}^m\left(\frac{m(m+n)}{s}\log\left(\frac{1}{4\pi s}\right)+\frac{|y|^2}{s^2}\right)
\end{equation}
over the set
$$E=\{(y,s)\in\mathbb{R}^{n+1}:0\leq s\leq 1/4\pi,|y|^2\leq 2s(m+n)\log(1/4\pi s)\}.$$
One can differentiate equation (\ref{kmn}) with respect to $y_i$ to obtain
\begin{align*}
\frac{\partial\kappa_{m,n}}{\partial y_i}(y,s)&=\left(\frac{m}{2}\tilde{A}^{m-2}(-2y_i)\right)\left(\frac{m(m+n)}{s}\log\left(\frac{1}{4\pi s}\right)+\frac{|y|^2}{s^2}\right)+\tilde{A}^m\frac{2y_i}{s^2}\\
&=\tilde{A}^{m-2}y_i\left(\frac{2\tilde{A}^2}{s^2}-m\left(\frac{m(m+n)}{s}\log\left(\frac{1}{4\pi s}\right)+\frac{|y|^2}{s^2}\right)\right).
\end{align*}
Clearly $\tilde{A}\neq 0$ in the interior of $E$, so at the maximum of $\kappa_{m,n}$ either $y_i=0$ or the term in brackets is $0$. Substituting $\tilde{A}$, we see that this is equal to
$$-\frac{(m+2)}{s^2}\left(|y|^2+(m-2)(m+n)s\log\left(\frac{1}{4\pi s}\right)\right).$$
However, as $m\geq 3$ and $0<s<(4\pi)^{-1}$, this quantity is always negative, so the maximum of $\kappa_{m,n}$ must occur with $y_i=0$. So it remains to see where $\kappa_{m,n}(0,s)$ is maximised. We have
$$\kappa_{m,n}(0,s)=\left[\frac{|B_1|(2(m+n))^{m/2}m(m+n)}{(2m+4)}\right]s^{(m-2)/2}\log\left(\frac{1}{4\pi s}\right)^{(m+2)/2}.$$
To find where the maximum occurs, we compute
\begin{align*}
&\frac{d}{ds}\left(s^{(m-2)/2}\log\left(\frac{1}{4\pi s}\right)^{(m+2)/2}\right)\\
=\,&s^{(m-4)/2}\left(\log\left(\frac{1}{4\pi s}\right)\right)^{m/2}\left(\frac{(m-2)}{2}\log\left(\frac{1}{4\pi s}\right)-\frac{(m+2)}{2}\right).
\end{align*}
As $0<s<(4\pi)^{-1}$, this can only be $0$ if the term in brackets is equal to $0$, which gives
$$\log\left(\frac{1}{4\pi s}\right)=\frac{m+2}{m-2},\quad s=(4\pi e^{(m+2)/(m-2)})^{-1}.$$
Thus the maximum $M_{m,n}$ of $\kappa_{m,n}$ is
$$\left[\frac{|B_1|(2(m+n))^{m/2}m(m+n)}{(2m+4)}\right](4\pi e^{(m+2)/(m-2)})^{-(m-2)/2}\left(\frac{m+2}{m-2}\right)^{(m+2)/2}.$$
We regroup the terms to a more convenient form:
\begin{align*}
M_{m,n}&=|B_1|\frac{2\pi}{e}\left(\frac{2(m+n)(m+2)}{(4\pi e)(m-2)}\right)^{m/2}\left(\frac{(m+2)m(m+n)}{(m-2)(m+2)}\right)\\
&=|B_1|\frac{2\pi}{e}\left(\frac{2(m+n)(m+2)}{(4\pi e)(m-2)}\right)^{m/2}\left(\frac{m(m+n)}{m-2}\right).
\end{align*}

\newpage
\bibliographystyle{abbrv}
\bibliography{Bibliography1}

John Green,\\ Maxwell Institute of Mathematical Sciences and the School of Mathematics,\\ University of Edinburgh,\\ JCMB, The King’s Buildings,\\ Peter Guthrie Tait Road,\\ Edinburgh, EH9 3FD,\\ Scotland\\ Email: \texttt{J.D.Green@sms.ed.ac.uk}
\end{document}